\documentclass[12pt]{amsart}
\usepackage{amsmath,amssymb,amsfonts,a4wide,color,mathtools,url}
\usepackage[initials]{amsrefs}
\usepackage{cite}
\usepackage[utf8]{inputenc} 


\let\phi\varphi
\let\epsilon\varepsilon
\let\setminus\smallsetminus

\let\ndiv\nmid

\let\leq\leqslant
\let\geq\geqslant


\newtheorem{Thm}{Theorem}[section]
\newtheorem{Prop}[Thm]{Proposition}
\newtheorem{Lem}[Thm]{Lemma}

\newtheorem{Rem}[Thm]{Remark}

\numberwithin{equation}{section}

%

\def\qed{{\hskip0pt\unskip\unskip\nobreak\hfil\penalty50
          \hskip1em\hbox{}\nobreak\hfil
           {$\square$}
          \parfillskip=0pt\finalhyphendemerits=0
          \par}\medskip}

               {\noindent{\bf Proof.}\ }
               {\qed}

               {\noindent{\bf Proof of #1.}\ }
               {\qed}


\hfuzz=3pt   


\begin{document}

\title{Computable Absolutely Normal Numbers and Discrepancies}

\author{
Adrian-Maria Scheerer
}

\thanks{TU Graz, \tt scheerer@math.tugraz.at}

\begin{abstract}
We analyze algorithms that output absolutely normal numbers digit-by-digit with respect to quality of convergence to normality of the output, measured by the discrepancy.
We consider explicit variants of algorithms by Sierpinski, by Turing and an adaption of constructive work on normal numbers by Schmidt. There seems to be a trade-off between the complexity of the algorithm and the speed of convergence to normality of the output. 
\end{abstract}

\date{\today}

\subjclass[2010]{11K16 (primary), 11Y16 (secondary)} 

\maketitle

\setcounter{tocdepth}{1}



\section{Introduction}
A real number is \emph{normal} to an integer base $b\geq 2$ if in its expansion to that base all possible finite blocks of digits appear with the same asymptotic frequency. A real number is \emph{absolutely normal} if it is normal to every integer base $b\geq 2$. While the construction of numbers normal to one base has been very successful, no construction of an absolutely normal number by concatenation of blocks of digits is known. However, there are a number of algorithms that output an absolutely normal number digit-by-digit. In this work, we analyze some these algorithms with respect to the quality of convergence to normality of the output, measured by the discrepancy.
\medskip

The \emph{discrepancy} of a sequence $(x_n)_{n\geq 1}$ of real numbers is the quantity
\begin{equation*}
D_N(x_n) = \sup_{I \subset [0,1)} \left\vert \frac{\sharp \{1 \leq n \leq N \mid x_n \bmod 1 \in I\}}{N} - \vert I \vert \right\vert,
\end{equation*}
where the supremum is over all subintervals of the unit interval. A sequence is \emph{uniformly distributed modulo one}, or equidistributed, if its discrepancy tends to zero as $N$ tends to infinity.

The \emph{speed of convergence to normality} of a real number $x$ (to some integer base $b\geq 2$) is the discrepancy of the sequence $\{b^n x\}$. $x$ is normal to base $b$ if and only if $\{b^n x \}$ is uniformly distributed modulo one \cite{wall1950:thesis}. Consequently, $x$ is absolutely normal if and only if the orbits of $x$ under the multiplication by $b$ map are uniformly distributed modulo one for every integer $b\geq 2$. It is thus natural to study the discrepancy of these sequences quantitatively as a measure for quality of convergence to normality.

A result by Schmidt \cite{schmidt1972:irreg_of_distr_7} shows that the discrepancy of a general sequence can at most be as good as $O(\frac{\log N}{N})$ when $N$ tends to infinity. The study of such so-called low-discrepancy sequences is a field in its own right. It is an open problem to give a construction of a normal number to some base that attains discrepancy this low. The best result in this direction is due to Levin \cite{levin1999:discrepancy} who constructed a number normal to one base with discrepancy $O(\frac{\log^2 N}{N})$. It is known \cite{gaalgal1964:discrepancy}, that Lebesgue almost all numbers satisfy $D_N = O( \frac{\sqrt{\log \log N}}{N^{1/2}})$.
For more on normal numbers, discrepancies and uniform distribution modulo one see the books \cite{bugeaud2012distribution}, \cite{drmota_tichy} and \cite{kuipers_niederreiter}.
\medskip

A construction for absolutely normal numbers was given by Levin \cite{levin1979:absolutely_normal} where he constructs a real number normal to a specified countable set of real bases larger than one, such that the discrepancy to any one of the bases is $O(\frac{(\log N)^2 \omega(N)}{N^{1/2}})$, where the speed of $\omega \to \infty$ and the implied constant depend on the base. Recently, Alvarez and Becher \cite{alvarez_becher2015:levin} analyzed Levin's work with respect to computability and discrepancy. They show that Levin's construction can yield a computable absolutely normal number $\alpha$ with discrepancy $O(\frac{(\log N)^3}{N^{1/2}})$. To output the first $N$ digits of $\alpha$, Levin's algorithm takes exponentially many (expensive) mathematical operations. Alvarez and Becher also experimented with small modifications of the algorithm.
\medskip

In this work the following algorithms are investigated.

\subsubsection*{Sierpinski} Borel's original proof \cite{borel1909} that Lebesgue almost all numbers are absolutely normal is not constructive. Sierpinski \cite{sierpinski1917:borel_elementaire} gave a constructive proof of this fact. Becher and Figueira \cite{becher_figueira:2002} gave a recursive reformulation of Sierpinski's construction. Sierpinski's algorithm outputs the digits to some specified base $b$ of an absolutely normal number $\nu$, depending on $b$, in double exponential time. $\nu$ has discrepancy $O(\frac{1}{N^{1/6}})$. This short calculation is presented in Section \ref{Sierpinksi_section}.

\subsubsection*{Turing} Alan Turing gave a computable construction to show that Lebesgue almost all real numbers are absolutely normal. His construction remained unpublished and appeared first in his collected works \cite{turing1992:collected_works}. Becher, Figueira and Picchi  \cite{becher_figueira_picchi2007:turing_unpublished} completed his manuscript and show that Turing's algorithm computes the digits of an absolutely normal number in double exponential time. This number has discrepancy $O(\frac{1}{N^{1/16}})$, see Section \ref{Turing_sec}.

\subsubsection*{Schmidt} In \cite{schmidt1961:uber_die_normalitat}, Schmidt gave  an algorithmic proof that there exist uncountably many real numbers normal to all bases in a given set $R$ and not normal to all bases in a set $S$ where $R$ and $S$ are such that elements of $R$ are multiplicatively independent of elements of $S$ and such that $R \cup S = \mathbb{N}_{\geq 2}$. In his construction he requires $S$ to be non-empty. However, in a final remark he points out that it should be possible to modify his construction for $S$ non-empty. The main purpose of this paper is to carry out the details of his remark explicitly to give an algorithmic construction of an absolutely normal number $\xi$. It takes exponentially many (expensive) mathematical operations to output the digits of $\xi$ and the discrepancy of $\xi$ is $O(\frac{\log \log N}{\log N})$. A small modification of the algorithm allows for discrepancy $O(\frac{1}{(\log N)^A})$ for any fixed $A>0$, but the output (i.e. $\xi$) depends on $A$.

Schmidt's main tool is cancellation in a certain trigonometric sum related to multiplicatively independent bases (Hilfssatz 5 in \cite{schmidt1961:uber_die_normalitat} and Lemma \ref{Schmidt:HS5} here). In Lemma \ref{HS5_explicit} we make the involved constants explicit which might also be of independent interest.
\medskip

Becher, Heiber, Slaman \cite{becher_heiber_slaman2013:polynomial_time_algorithm} gave an algorithm that can compute the digits of an absolutely normal number in polynomial time. The discrepancy was not analyzed. In \cite{madritsch_scheerer_tichy2015:absolutely_pisot_normal} it is shown that the discrepancy is slightly worse than $O(\frac{1}{\log N})$, and that at a small loss of computational speed the discrepancy can in fact be $O(\frac{1}{\log N})$.

\subsubsection*{Notation} For a real number $x$, we denote by $\lfloor x \rfloor$ the largest integer not exceeding $x$. The fractional part of $x$ is denoted as $\{x\}$, hence $x = \lfloor x \rfloor + \{x\}$. 
Two functions $f$ and $g$ are $f = O(g)$ or equivalently $f \ll g$ if there is a $x_0$ and a positive constant $C$ such that $f(x) \leq C g(x)$ for all $x\geq x_0$. We mean $\lim_{x\to\infty} f(x)/g(x) = 1$ when we say $f\sim g$ and $g\neq 0$.
We abbreviate $e(x) = \exp(2\pi i x)$.
Two integers $r$, $s$ are multiplicatively dependent, $r\sim s$, when they are rational powers of each other.

In our terminology, \emph{mathematical operations} include addition, subtraction, multiplication, division, comparison, exponentiation and logarithm. \emph{Elementary operations} take a fixed amount of time. 
When we include the evaluation of a complex number of the form $\exp(2\pi i x)$ as a mathematical operation we refer to it as being \emph{expensive}.

\section{Schmidt's Algorithm}

In this section we present an algorithm to compute an absolutely normal number that can be derived from Schmidt's work \cite{schmidt1961:uber_die_normalitat}.
Schmidt's construction employs Weyl's criterion for uniform distribution and as such uses exponential sums. The following estimate for trigonometric series is his main tool.

\begin{Lem}[Hilfssatz 5 in \cite{schmidt1961:uber_die_normalitat}]\label{Schmidt:HS5}
Let $r$ and $s$ be integers greater than $1$ such that $r\not\sim s$. Let $K, l$ be positive integers such that $l \geq s^K$. Then
\begin{equation}
\sum_{n=0}^{N-1} \prod_{k=K+1}^\infty \vert \cos(\pi r^n l /s^k)\vert \leq 2N^{1-a_{20}} \label{HS5:sum}
\end{equation}
for some positive constant $a_{20}$ only dependent on $r$ and $s$.
\end{Lem}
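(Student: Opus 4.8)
\emph{A self-similar product.} The plan is to replace the infinite product by a short trigonometric polynomial, apply Cauchy--Schwarz, and reduce matters to estimating exponential sums $\sum_{n<N}e(\beta r^{n})$ for rationals $\beta$ whose denominator divides a power of $s$; cancellation in those sums, which is where $r\not\sim s$ enters, then finishes the proof. Put $F(x)=\prod_{k\ge 1}|\cos(\pi x/s^{k})|$; this converges since $|\cos(\pi x/s^{k})|=1+O(s^{-2k})$, and reindexing gives $\prod_{k=K+1}^{\infty}|\cos(\pi r^{n}l/s^{k})|=F(r^{n}l/s^{K})$, so \eqref{HS5:sum} is a bound for $\sum_{n<N}F(r^{n}l/s^{K})$. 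First I would reduce to the case $\gcd(r,s)=1$: after absorbing into $K$ any powers of $s$ dividing $l$, one uses $r\not\sim s$ to single out a prime $q\mid s$ relative to which the factors $|\cos(\pi r^{n}l/s^{k})|$ that are not identically $1$ retain a genuinely fractional, $q$-adically nontrivial argument, and tracking this rewrites the sum in the same shape with the two (still multiplicatively independent) bases now coprime. This is the one essential use of $r\not\sim s$: if $r\sim s$ the product need not decay and the sum can be of order $N$. I expect the exponent bookkeeping here to be the fussiest part of the argument.

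\emph{Truncation and Cauchy--Schwarz.} Assume now $\gcd(r,s)=1$, $r\ge 2$. For every $M$ one has $F(x)^{2}\le G_{M}(x):=\prod_{k=1}^{M}\cos^{2}(\pi x/s^{k})$, hence by Cauchy--Schwarz
\[
\Bigl(\sum_{n=0}^{N-1}F(r^{n}l/s^{K})\Bigr)^{2}\le N\sum_{n=0}^{N-1}G_{M}(r^{n}l/s^{K}).
\]
Here $G_{M}$ is a nonnegative trigonometric polynomial of period $s^{M}$: from $G_{M}(x)=\prod_{k=1}^{M}\tfrac{1+\cos(2\pi x/s^{k})}{2}$ one gets $G_{M}(x)=\sum_{|h|<s^{M}}c_{h}\,e(hx/s^{M})$ with $c_{h}\ge 0$, $\sum_{h}c_{h}=1$, $c_{0}=2^{-M}$, and every $h$ with $c_{h}\ne 0$ of the form $h=s^{m}u$ with $0\le m<M$ and $\gcd(u,s)=1$; in particular $s^{M-t_{0}}\mid h$ forces $m\ge M-t_{0}$, so such $h$ number $\le 2s^{t_{0}}$ and satisfy $c_{h}\le 3^{t_{0}}2^{-M}$. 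With $M=\lceil c_{1}\log N\rceil$,
\[
\sum_{n<N}G_{M}(r^{n}l/s^{K})=2^{-M}N+\sum_{0<|h|<s^{M}}c_{h}\sum_{n<N}e\!\left(\frac{hl\,r^{n}}{s^{K+M}}\right),
\]
the first term being harmless. Call $h$ \emph{bad} if $s^{M-t_{0}}\mid h$, with $t_{0}=\lceil c_{2}\log N\rceil$, and \emph{good} otherwise: for bad $h$ the bounds above give total $\le N\cdot 2(3s)^{t_{0}}2^{-M}$; for good $h$ with $c_{h}\ne 0$ one has $m<M-t_{0}$, and writing $hl\,r^{n}/s^{K+M}=u l\,r^{n}/s^{K+M-m}$ and cancelling (since $\gcd(r,s)=\gcd(u,s)=1$) the factor $\gcd(l,s^{K+M-m})$ turns this into $b\,r^{n}/D$ with $\gcd(b,D)=1$, $D\mid s^{K+M}$, and --- using that $s^{K+1}\nmid l$ --- $D\ge 2^{t_{0}}$.

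\emph{The exponential sum.} It remains to bound $\sum_{n<N}e(br^{n}/D)$ with $\gcd(b,D)=1$, $D\mid s^{K+M}$, $D\ge 2^{t_{0}}$. The sequence $n\mapsto r^{n}\bmod D$ is periodic with period $T=\operatorname{ord}_{D}(r)$, and since $\gcd(r,s)=1$, $r\ge 2$, the lifting-the-exponent description of multiplicative orders together with the structure of $(\mathbb{Z}/D\mathbb{Z})^{\times}$ gives $T\ge c_{r,s}\,D$; equivalently $\langle r\rangle\le(\mathbb{Z}/D\mathbb{Z})^{\times}$ has index bounded in terms of $r,s$ alone. Writing $\sum_{n<N}e(br^{n}/D)$ as $\lfloor N/T\rfloor$ full periods plus a tail of length $<T$, the full-period sum equals $\sum_{x\in\langle r\rangle}e(bx/D)$; expanding the indicator of $\langle r\rangle$ over the multiplicative characters trivial on it turns it into an average of Gauss sums $\sum^{\ast}_{x\bmod D}\chi(x)e(bx/D)$, each of modulus $\le\sqrt{D}$ since $\gcd(b,D)=1$, so the full-period sum is $\ll\sqrt{D}$; as $T\gg D$ its contribution is $\ll N D^{-1/2}$, and the tail is $\le T\le D$. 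Good $h$ satisfy $2^{t_{0}}\le D\le s^{K+M}$, so choosing $c_{1},c_{2}>0$ small enough in terms of $r,s$ (e.g.\ $c_{1}\log s<1$ and $c_{2}\log(3s)<c_{1}\log 2$) makes each contribution --- the $2^{-M}N$, the bad sum, and $N D^{-1/2}$ and $D$ for each good $h$ --- of size $O(N^{1-\delta})$ for some $\delta=\delta(r,s)>0$. Feeding this into the Cauchy--Schwarz bound gives $\sum_{n<N}F(r^{n}l/s^{K})\ll N^{1-\delta/2}$, and combining with the trivial bound $\le N$ for $N$ below a threshold yields \eqref{HS5:sum} with $a_{20}=\delta/2$, after shrinking $a_{20}$ slightly to absorb constants. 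The two genuine obstacles are the reduction to coprime bases in the first step --- the sole essential use of $r\not\sim s$ --- and the Gauss-sum estimate for the full-period sum.
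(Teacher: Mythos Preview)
Your approach is genuinely different from Schmidt's. Schmidt's proof (traced explicitly in Section~\ref{section:constants}) is digit-combinatorial: he shows that for all but $N^{1-a_{14}}$ values of $n<N$ the integer $lr^n$ has at least $a_{15}\log N$ ``nice'' digit pairs in base $s$ among the digits with index exceeding $K$, and each such pair forces a factor $\le\cos(\pi/s^{2})<1$ into the product, whence $a_{20}=\min(a_{14},a_{22})$. Your route --- Cauchy--Schwarz, truncation to a trigonometric polynomial $G_M$, then bounding $\sum_{n<N}e(br^{n}/D)$ via periodicity and Gauss sums --- is Fourier-analytic and, where it applies, cleaner.

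There is, however, a genuine gap: your bound is not uniform in $K$, and the lemma requires exactly this ($K$ and $l$ are arbitrary subject to $l\ge s^{K}$, with $a_{20}$ depending on $r,s$ alone). After your Fourier expansion the denominator is $D=s^{K+M-m}$, and your estimate for the incomplete period is simply its length, bounded by $T\le D\le s^{K+M}$. For this to be $O(N^{1-\delta})$ you need $K=O(\log N)$. But take $l=s^{K}+1$ with $K=N^{2}$: then $\gcd(l,s)=1$, your normalisation does nothing, and for every good $h$ one has $D\ge s^{K}\gg N$, hence $T\gg D\gg N$; there are no full periods at all, and the only bound available on $\sum_{n<N}e(br^{n}/D)$ is the trivial $N$. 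Known estimates for such short exponential sums along a geometric progression modulo a prime power (Korobov, Postnikov, Konyagin--Shparlinski) do not yield a power saving uniform in the modulus, which is what you would need. Schmidt's argument evades the issue entirely because the number of base-$s$ digits of $lr^{n}$ with index exceeding $K$ is at least $\log_{s}(lr^{n})-K\ge n\log_{s}r$ (using $l\ge s^{K}$), so his nice-pair count never sees $K$.

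A secondary point: the reduction to $\gcd(r,s)=1$ is where Schmidt is most careful (his Hilfss\"atze~1--3 introduce the auxiliary ratios $t_i=r^{e_i}/s^{d_i}$ and split the progression $lr^{n}$ into $O_{r,s}(1)$ subprogressions precisely to handle shared prime factors), and your one-sentence sketch does not obviously survive contact with the details. But the uniformity-in-$K$ problem above is already fatal in the coprime case.
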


In Section \ref{section:constants} we give an explicit version of Lemma \ref{Schmidt:HS5}.

\subsection{The Algorithm}

We begin by stating Schmidt's algorithm. In Schmidt's notation we are specializing to the case $R = \mathbb{N}_{\geq 2}$. We have incorporated his suggestion how to modify the set $S$ accordingly as to produce absolutely normal numbers.

\subsubsection*{Setup}
Let $R = (r_i)_{i\geq1} = \mathbb{N}_{\geq 2}$ (in increasing order) and let $S = (s_j)_{j\geq 1}$ be a sequence of integers $s > 2$ such that $s_m \leq ms_1$ and such that for each $r \in R$ there is an index $m_0(r)$ such that $r\not\sim s_m$ for all $m \geq m_0(r)$. Let $\beta_{i,j} = a_{20}(r_i, s_j)$ from \ref{HS5:sum} and denote by $\beta_k = \min_{1 \leq i,j \leq k} \beta_{i,j}$. We can assume that $\beta_k < \frac{1}{2}$. Let $\gamma_k = \max(r_1, \ldots, r_k, s_1, \ldots, s_k)$.

Schmidt assumes that the sequences $R$ and $S$ are such that $\beta_k \geq \beta_1 / k^{1/4}$ and that $\gamma_k \leq \gamma_1 k$ holds. This can be achieved by repeating the values of the sequences $R$ and $S$ sufficiently many times. 
Set $\phi(1) = 1$ and let $\phi(k)$ be the largest integer $\phi$ such that the conditions
\begin{equation*}
\phi \leq \phi(k-1) + 1, \quad 
\beta_\phi \geq \frac{\beta_1}{k^{1/4}} \quad \text{and} \quad
\gamma_\phi \leq \gamma_1 k
\end{equation*}
hold. Then modify the sequences $R$ and $S$ according to $r'_i = r_{\phi(i)}$, $s'_i = s_{\phi(i)}$. 
Note that (up to suitable repetition) $S$ can be chosen to be the set of positive integers bigger than $2$ that are not perfect powers.
In principle, using the explicit version of Hilfssatz 5, Lemma \ref{HS5_explicit}, one could write down $R$ and $S$ explicitly.

Following Schmidt, we introduce the following symbols where $m$ is a positive integer. Let $\langle m \rangle = \lfloor e^{\sqrt{m}} + 2s_1 m^3 \rfloor$, denote $\langle m ; x \rangle = \lfloor \langle m \rangle / \log x \rfloor$ for $x>1$ and let $a_m = \langle m ; s_m \rangle$, $b_m = \langle m+1; s_m \rangle$.

\subsubsection*{Algorithm}
Step $0$: Put $\xi_0 = 0$.

Step $m$: Compute $a_m, b_m$, $s_m$. We have from the previous step $\xi_{m-1}$. 
Let $\sigma_m(\xi_{m-1})$ be the set of all numbers
$$\eta_m(\xi_{m-1}) + c_{a_m+1}^{s_m} s_m^{-a_m-1} + \ldots + c_{b_m-2}^{s_m} s_m^{-b_m+2}$$
where the digits $c$ are $0$ or $1$, and where $\eta_m(\xi_{m-1})$ is the smallest of the numbers $\eta = g s_m^{-a_m}$, $g$ an integer, that satisfy $\xi_{m-1} \leq \eta$.

Let $\xi_m$ be the smallest of the numbers in $\sigma_m(\xi_{m-1})$ that minimize
\begin{equation}
A_m'(x) = \sum_{\substack{t=-m \\ t\neq 0}}^m \sum_{\substack{i \leq m\\ m_0(r_i) \leq m}} \left\vert \sum_{j = \langle m; r_i \rangle + 1}^{\langle m+1 ; r_i \rangle} e(r_i^j t x) \right\vert^2
\end{equation}\label{Amprime}
\medskip

The following lemma establishes cancellation in the sums $A'_m$ in order for Weyl's criterion to apply.

\begin{Lem}\label{A_m'_cancellation_lemma}
There exists a positive absolute constant $\delta'_1$ such that
\begin{equation}\label{A_m'estimate}
A_m'(\xi) \leq \delta_1' m^2( \langle m +1 \rangle - \langle m \rangle )^{2-\beta_m}
\end{equation}
\end{Lem}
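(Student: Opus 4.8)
The plan is an averaging (first-moment) argument that funnels into Hilfssatz~5. By construction $\xi_m$ lies in the finite set $\sigma_m(\xi_{m-1})$ and is chosen there so that $A_m'$ is as small as possible, so it suffices to bound the \emph{average} of $A_m'$ over $\sigma_m(\xi_{m-1})$; and once the squares in $A_m'$ are expanded and averaged over the free digits, the contribution of those digits collapses to a product of cosines of exactly the form to which Lemma~\ref{Schmidt:HS5} applies. Write $\xi_m = \eta_m + \sum_{k=a_m+1}^{b_m-2} c_k s_m^{-k}$ with $c_k\in\{0,1\}$ and let $\mathbb{E}$ be the average over $(c_k)\in\{0,1\}^{b_m-a_m-2}$, which is the average over $\sigma_m(\xi_{m-1})$. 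Then $A_m'(\xi_m)=\min_{x\in\sigma_m(\xi_{m-1})}A_m'(x)\le\mathbb{E}\,A_m'(\xi_m)$, so I only need to estimate $\mathbb{E}\,\big|\sum_{\langle m;r_i\rangle<j\le\langle m+1;r_i\rangle}e(r_i^j t\,\xi_m)\big|^2$ for each $t$ with $0<|t|\le m$ and each $i\le m$ with $m_0(r_i)\le m$.

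Next I would do the averaging. Expanding the square as $\sum_{j,j'}e\big((r_i^j-r_i^{j'})t\,\xi_m\big)$ and using independence of the $c_k$, the $(j,j')$-term averages to something of modulus $\prod_{k=a_m+1}^{b_m-2}\big|\cos\big(\pi(r_i^j-r_i^{j'})t/s_m^k\big)\big|$, because $\big|\tfrac12(1+e(\theta))\big|=|\cos\pi\theta|$. The $N_i:=\langle m+1;r_i\rangle-\langle m;r_i\rangle$ diagonal terms $j=j'$ equal $1$. For the off-diagonal terms I would symmetrise in $j\leftrightarrow j'$, enlarge the finite cosine product to the infinite one $\prod_{k\ge a_m+1}$, and --- writing $h=j-j'\ge1$ and $j'=\langle m;r_i\rangle+p$ --- factor
\[
 (r_i^j-r_i^{j'})\,t=\pm\,l_h\,r_i^{p-1},\qquad l_h:=|t|\,r_i^{\langle m;r_i\rangle+1}(r_i^h-1),
\]
the sign being immaterial under $|\cos|$. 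This is the delicate step: one must keep the minimal power $r_i^{\langle m;r_i\rangle+1}$ \emph{inside} the fixed factor $l_h$ rather than absorbing it into the running index, for it is this power that makes $l_h$ large; absorbing it would leave $l=|t|(r_i^h-1)$, which over such short blocks is far below $s_m^{a_m}$, and then Lemma~\ref{Schmidt:HS5} would be unavailable.

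With that factorisation the inner sum reads $\sum_q\prod_{k\ge a_m+1}\big|\cos(\pi r_i^q l_h/s_m^k)\big|$, with $q=p-1$ running over $N_i-h$ consecutive values, and I would apply Lemma~\ref{Schmidt:HS5} with $r=r_i$, $s=s_m$, $K=a_m$, $l=l_h$, $N=N_i-h$. The hypotheses hold: $r_i\not\sim s_m$ since the $i$-sum in $A_m'$ is restricted to $m_0(r_i)\le m$; and $l_h\ge r_i^{\langle m;r_i\rangle+1}\ge e^{\langle m\rangle}\ge s_m^{a_m}$, the bounds $r_i^{\langle m;r_i\rangle+1}\ge e^{\langle m\rangle}$ and $s_m^{a_m}\le e^{\langle m\rangle}$ being exactly what the normalisation $\langle m;x\rangle=\lfloor\langle m\rangle/\log x\rfloor$ is designed to give. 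Hence the inner sum is $\le2(N_i-h)^{1-a_{20}(r_i,s_m)}\le2N_i^{1-\beta_m}$, using $a_{20}(r_i,s_m)=\beta_{i,m}\ge\beta_m$; summing over $1\le h\le N_i-1$ bounds the off-diagonal part by $4N_i^{2-\beta_m}$, so $\mathbb{E}\,\big|\sum_j e(r_i^j t\xi_m)\big|^2\le N_i+4N_i^{2-\beta_m}\le5N_i^{2-\beta_m}$. Summing over the $\le2m$ values of $t$ and $\le m$ values of $i$, and using $N_i\le(\langle m+1\rangle-\langle m\rangle)/\log2+1\ll\langle m+1\rangle-\langle m\rangle$, gives $A_m'(\xi_m)\ll m^2(\langle m+1\rangle-\langle m\rangle)^{2-\beta_m}$.

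It remains to pass from $\xi_m$ to $\xi$. The later steps together move the number by at most $O(m\,e^{-\langle m+1\rangle})$, whereas $A_m'$ involves only frequencies $\le m\,r_i^{\langle m+1;r_i\rangle}\le m\,e^{\langle m+1\rangle}$; a term-by-term comparison (via $|e(a)-e(b)|\le2\pi|a-b|$) then gives $|A_m'(\xi)-A_m'(\xi_m)|\ll m^4(\langle m+1\rangle-\langle m\rangle)$. Since $\langle m+1\rangle-\langle m\rangle$ eventually exceeds every power of $m$ and $2-\beta_m>\tfrac32$, this is $o\big(m^2(\langle m+1\rangle-\langle m\rangle)^{2-\beta_m}\big)$, and the finitely many small $m$ only affect the value of the absolute constant $\delta_1'$. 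In summary, the crux is the reorganisation of the off-diagonal double sum so that Hilfssatz~5 becomes applicable; everything else is routine bookkeeping.
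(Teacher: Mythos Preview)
Your overall strategy---average $A_m'$ over $\sigma_m(\xi_{m-1})$, expand the square, reduce to cosine products, then feed these into Hilfssatz~5---is exactly Schmidt's proof of Hilfssatz~7, which is what the paper invokes. The factorisation $(r_i^j-r_i^{j'})t=\pm l_h r_i^{p-1}$ with $l_h=|t|\,r_i^{\langle m;r_i\rangle+1}(r_i^h-1)$ and the verification $l_h\ge s_m^{a_m}$ are correct and are indeed the heart of the matter.

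There is, however, a genuine gap at the step ``enlarge the finite cosine product to the infinite one $\prod_{k\ge a_m+1}$''. The inequality runs the wrong way: since each factor $|\cos(\cdot)|\le 1$, one has
\[
\prod_{k=a_m+1}^{b_m-2}\bigl|\cos(\pi\theta/s_m^k)\bigr|\ \ge\ \prod_{k=a_m+1}^{\infty}\bigl|\cos(\pi\theta/s_m^k)\bigr|,
\]
so passing to the infinite product gives a \emph{lower} bound on each term, not the upper bound you need. Nor is the tail $\prod_{k\ge b_m-1}$ automatically bounded away from zero: with $\theta=(r_i^j-r_i^{j'})t$ and $j$ near $\langle m+1;r_i\rangle$ one has $|\theta|/s_m^{b_m-1}$ of order $m s_m^2$, so individual tail factors can vanish. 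The fix---and this is precisely what the paper alludes to when it says ``Schmidt evaluates the inner sum over $j$ in $A_m'$ trivially on a range of size $O(m)$''---is to first split off the top $\Delta$ values of $j$ (and of $j'$), with $\Delta$ of order $\log m$ or $m$, so that on the remaining range $|\theta|/s_m^{b_m-1}<\tfrac12$ and the tail product is $\ge\tfrac12$; then the finite product is at most twice the infinite one and Hilfssatz~5 applies. The excised range contributes $O(\Delta^2)$ to $|S|^2$, which is negligible.

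The passage from $\xi_m$ to $\xi$ has the same defect in miniature: the ``term-by-term'' bound $|e(a)-e(b)|\le 2\pi|a-b|$ is useless for the largest few $j$, where $r_i^j|t|\,|\xi-\xi_m|$ is of order $m s_m^3\gg 1$. Splitting off those top $O(\log m)$ values of $j$ (bounding them trivially by $2$ each) and applying the mean-value bound to the rest gives $|S(\xi)-S(\xi_m)|\ll\sqrt{m}$, which is more than enough; but as written your estimate $|A_m'(\xi)-A_m'(\xi_m)|\ll m^4(\langle m+1\rangle-\langle m\rangle)$ is asserted without a valid justification.
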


\begin{proof}
Schmidt's proof of Hilfssatz 7 in \cite{schmidt1961:uber_die_normalitat} can directly by adopted. The inner sum in $A_m'$ over $j$ is essentially the same as in Schmidt's function $A_m$. The outer sums over $r_i$ and $t$ are evaluated trivially and contribute a constant factor times $m^2$. 
\end{proof}

\begin{Rem}
Following the constants in Schmidt's argument shows that $\delta_1' \approx 36$ is admissible.
\end{Rem}

Schmidt shows that the sequence $(\xi_m)_{m\geq 1}$ has a limit $\xi$ that is normal to all bases in the set $R$, i.e. absolutely normal. We have the approximations
\begin{equation}\label{Error_xi_xi_m}
\xi_m \leq \xi < \xi_m + s_m^{-b_m+2}.
\end{equation}

\subsection{Complexity}

We given an estimate for the number of (expensive) mathematical operations Schmidt's algorithm takes to compute the first $N$ digits of the absolutely normal number $\xi$ to some given base $r \geq 2$.

Note that from \ref{Error_xi_xi_m}, in step $M$ of the algorithm the first $b_M -2$ digits of $\xi$ to base $s_M$ are known. They determine the first $(b_M - 2) \frac{\log s_M}{\log r}$ digits of $\xi$ to base $r$. 
Since $s_M \geq 3$ and $b_M \sim e^{M^{1/2}}$, we have $(b_M - 2) \frac{\log s_M}{\log r} > N$ for $M$ of the order of $(\log N)^2$.

Naively finding the minimum of $A'_m$ in each step $m\leq M$ by calculating all values $A'_m(x)$ for $x$ in the set $\sigma_m(\xi_{m-1})$ costs $O(e^{m^{1/2}})$ computations of a complex number of the form $e(r_i^j tx)$ for each of the $2^{b_m-a_m -2} = O(2^{e^{m^{1/2}}})$ elements $x$ in $\sigma_m(\xi_{m-1})$. Hence in each step $m$ we need to perform
$e^{m^{1/2}} \cdot 2^{e^{m^{1/2}}} = O(N2^N)$ 
mathematical operations. Carrying out $M \approx (\log N)^2$ many steps of the algorithm, this are in total
\begin{equation*}
O(N 2^N (\log N)^2) = O(e^N)
\end{equation*}
many mathematical operations.

\subsection{Discrepancy}
We fix a base $r \geq 2$ and $t$ shall denote a non-zero integer. For a large natural number $N$, using Schmidt's Hilfssatz 7, the Erd\H{o}s-Tur\'{a}n inequality, and via approximating $N$ by a suitable value $\langle M; r \rangle$ we can find an upper bound for the discrepancy $D_N(\{r^n \xi\})$.

\begin{Thm}
The discrepancy of Schmidt's absolutely normal number $\xi$ is
\begin{equation}
D_N ( \{r^n \xi \} ) \ll \frac{\log \log N}{\log N}
\end{equation}
where the implied constant and `$N$ large enough' depend on the base $r$.
\end{Thm}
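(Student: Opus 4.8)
The plan is to bound $D_N(\{r^n\xi\})$ by combining the cancellation estimate of Lemma~\ref{A_m'_cancellation_lemma} with the Erd\H{o}s--Tur\'an inequality, and then to calibrate the cutoff parameter $M$ against $N$. First I would recall the Erd\H{o}s--Tur\'an inequality: for any positive integer $H$,
\begin{equation*}
D_N(\{r^n\xi\}) \ll \frac{1}{H} + \sum_{t=1}^{H} \frac{1}{t} \left\vert \frac{1}{N} \sum_{n=0}^{N-1} e(t r^n \xi) \right\vert.
\end{equation*}
The exponential sum $\sum_{n} e(t r^n \xi)$ is, up to boundary effects coming from the first few steps of the construction, controlled by the blocks $\sum_{j=\langle m;r\rangle+1}^{\langle m+1;r\rangle} e(r^j t \xi)$ appearing inside $A'_m$. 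So I would choose $M$ so that $\langle M; r\rangle \le N < \langle M+1; r\rangle$, split the range $0 \le n < N$ into the blocks indexed by $m = m_0(r), \dots, M-1$ plus a terminal partial block, and for each block of length $\langle m+1\rangle - \langle m\rangle$ use Cauchy--Schwarz together with \eqref{A_m'estimate} to get
\begin{equation*}
\left\vert \sum_{j=\langle m;r\rangle+1}^{\langle m+1;r\rangle} e(r^j t\xi) \right\vert \le \sqrt{A'_m(\xi)} \ll m \, (\langle m+1\rangle - \langle m\rangle)^{1-\beta_m/2}.
\end{equation*}

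Next I would sum these block contributions. Since $\langle m\rangle = \lfloor e^{\sqrt m} + 2s_1 m^3\rfloor$, the top block dominates: $\langle M\rangle - \langle M-1\rangle$ is already of the order of $N$, so $\sum_{m\le M}(\langle m+1\rangle-\langle m\rangle)^{1-\beta_m/2}$ is essentially $M \cdot N^{1-\beta_M/2}$ up to the polynomial-in-$m$ factors and the geometric-type decay in $m$. The key point is that $M \asymp (\log N)^2$ (from $b_M \sim e^{\sqrt M}$, as already used in the complexity section) and $\beta_M \ge \beta_1 M^{-1/4}$ by the normalization of the sequences, so $\beta_M \gg (\log N)^{-1/2}$. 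Dividing by $N$, the main term becomes of order $M^{O(1)} N^{-\beta_M/2} \ll \exp\big(-c\, (\log N)^{1/2} + O(\log\log N)\big)$, which is far smaller than $1/\log N$; hence the exponential-sum part of the Erd\H{o}s--Tur\'an bound is negligible. I also need to account for the error $\xi - \xi_M < s_M^{-b_M+2}$ from \eqref{Error_xi_xi_m} when replacing $\xi$ by the actually-computed $\xi_M$ inside $e(tr^n\xi)$; since $r^n t \le r^N H$ and $s_M^{-b_M+2}$ decays doubly exponentially in $M \asymp (\log N)^2$, this perturbation is also negligible. Therefore the dominant term is $1/H$.

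Finally I would optimize $H$. The binding constraint is not the exponential-sum term but the truncation error $1/H$ in Erd\H{o}s--Tur\'an, together with the fact that the estimate \eqref{A_m'estimate} is only available for $|t| \le m \le M \asymp (\log N)^2$; choosing $H$ of order $M \asymp (\log N)^2$ is the largest we can afford, and then $1/H \asymp 1/(\log N)^2$, while the $\sum_t 1/t$ over $t \le H$ introduces at most a $\log H \asymp \log\log N$ factor in front of the (already tiny) exponential-sum contribution. Re-examining, the genuine bottleneck that produces the stated rate $\frac{\log\log N}{\log N}$ rather than something smaller must come from handling the bases $r$ with $m_0(r)$ large and from the coarse approximation of $N$ by $\langle M; r\rangle$: the ``jump'' in $M$ from $N$ to the next block boundary can be as large as a factor involving $\log N$, and digits of $\xi$ to base $r$ are only synchronized with digits to base $s_M$ up to the factor $\log s_M/\log r$, forcing a loss. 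I would therefore track carefully how $N$ sits between $\langle M;r\rangle$ and $\langle M+1;r\rangle$ and bound the terminal partial block trivially by its length divided by $N$, which is where a factor like $\frac{\log\log N}{\log N}$ realistically enters. \textbf{The main obstacle} I anticipate is precisely this calibration step: showing that the partial-block and base-change losses, rather than the exponential-sum cancellation, are what dictate the final rate, and verifying that they indeed contribute no worse than $O\!\left(\frac{\log\log N}{\log N}\right)$ uniformly in $N$ once the base $r$ is fixed.
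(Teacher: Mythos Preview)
Your approach is the paper's: Erd\H{o}s--Tur\'an, block decomposition at the points $\langle m;r\rangle$, cancellation from Lemma~\ref{A_m'_cancellation_lemma} on the complete blocks, and a trivial bound on the terminal partial block. You also correctly realise by the end that the partial block, not the exponential-sum cancellation, is the bottleneck.

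The genuine gap is that you never carry out the one computation that actually determines the rate: the length of the partial block. In the paper this is
\[
\langle M+1;r\rangle - \langle M;r\rangle \ \ll\ e^{\sqrt{M}}\bigl(e^{1/(2\sqrt{M})}-1\bigr) + M^2 \ \ll\ \frac{e^{\sqrt{M}}}{\sqrt{M}} \ \asymp\ \frac{N}{\log N},
\]
since $M\asymp(\log N)^2$. Hence every Weyl sum is $\ll N/\log N$ uniformly in $t$ (the complete-block part being much smaller, as you argued), and Erd\H{o}s--Tur\'an with $H=\log N$ gives $D_N \ll \tfrac{1}{H}+\sum_{t\le H}\tfrac{1}{t}\cdot\tfrac{1}{\log N}\ll\tfrac{\log\log N}{\log N}$. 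Two clarifications to your write-up: the $\log\log N$ factor comes from $\sum_{t\le H}1/t\asymp\log H$ multiplying the partial-block contribution $1/\log N$, not from the partial block itself; and your earlier suggestion that one could take $H\asymp(\log N)^2$ and obtain $1/H\asymp 1/(\log N)^2$ overlooks that the partial-block term $1/\log N$ already dominates $1/H$ for any such $H$. Finally, the $\xi-\xi_m$ perturbation you worry about is unnecessary here: Lemma~\ref{A_m'_cancellation_lemma} is stated for $\xi$, so the passage from $\xi_m$ to $\xi$ has already been absorbed.
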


\begin{proof}
For a given $N$ large enough, let $M$ such that $\langle M ; r \rangle \leq N < \langle M+1 ; r \rangle$. Such an $M$ is of size $O((\log (\log r \cdot N))^2)$.

We split the Weyl sum $\sum_{n=1}^N e(r^n t \xi)$ according to
\begin{equation}\label{Weyl_sum_schmidt}
\sum_{n=1}^N e(r^n \xi t) = \sum_{n=1}^{\langle M ; r \rangle } e(r^n \xi t) + \sum_{n=\langle M ; r \rangle + 1}^N e(r^n \xi t).
\end{equation}

An estimate for the first sum $\sum_{n=1}^{\langle M; r \rangle} e(r^n \xi t)$ in \ref{Weyl_sum_schmidt} can be obtained from \ref{A_m'_cancellation_lemma} and yields
\begin{align*}
\sum_{n=1}^{\langle M; r \rangle} e(r^n t \xi)
&= \sum_{m=m_0(r)}^{M-1} \sum_{n=\langle m; r \rangle+1}^{\langle m+1; r \rangle} e(r^n t \xi) + O(1) \\
& \ll \sum_{m=m_0(r)}^{M-1} m ( \langle m+1 \rangle - \langle m \rangle )^{1-\frac{\beta_m}{2}} \\
& \leq M \sum_{m=1}^{M-1} ( \langle m+1 \rangle - \langle m \rangle)^{1-\frac{\beta_{M}}{2}} \\
&< M^2 \left( \sum_{m=1}^{M-1} \langle m+1 \rangle - \langle m \rangle \right)^{1- \frac{\beta_{M}}{2}}
\end{align*}
which is equal to $M^2 \langle M \rangle^{1-\frac{b_{M}}{2}}$. 
Using the decay property $\beta_M \geq \beta_1 M^{-1/2}$, the first sum in \ref{Weyl_sum_schmidt} is thus
\begin{align}\label{Estimate for Weyl sums}
\ll M^2 e^{M^{1/2} - \frac{\beta_1}{2} M^{1/4}}.
\end{align}

For the error of approximation of $N$ via $\langle M ; r \rangle$ we calculate for fixed $r,s_1$, and $M$ large enough,
\begin{equation}\label{symbol_m_diff}
\langle M+1; r \rangle - \langle M; r \rangle \ll e^{\sqrt{M}} \left( e^{\frac{1}{2 \sqrt{M}}}-1 + \frac{M^2}{e^{\sqrt{M}}} \right)
\end{equation}
where the implied constant depends on $s_1$ and $r$. We used $\sqrt{M+1} - \sqrt{M} = 1/(\sqrt{M+1}+\sqrt{M}) \leq 1/2\sqrt{M}$. For $M$ large enough we have $e^{1/2\sqrt{M}} \leq 1+ \frac{1}{\sqrt{M}}$, hence \ref{symbol_m_diff} is
\begin{align*}
&\leq e^{\sqrt{M}} \left( \frac{1}{{\sqrt{M}}} + \frac{M^2}{e^{\sqrt{M}}} \right).
\end{align*}
Thus,
\begin{equation}\label{ErrorNviabm}
\langle M+1; r \rangle - \langle M; r \rangle = e^{\sqrt{M}} \cdot O\left( \frac{1}{\sqrt{M}} \right)
= \langle M; r \rangle \cdot O\left( \frac{1}{\sqrt{M}} \right).
\end{equation}
With $M$ of order $(\log(\log r \cdot N))^2$, this is
\begin{equation*}
\ll \frac{N}{\log (\log r \cdot N)} \sim \frac{N}{\log N},
\end{equation*}
hence the second term in \ref{Weyl_sum_schmidt} dominates the first.

The Erd\H{o}s-Tur\'{a}n inequality applied to the sequence $\{r^n \xi \}_{n\geq 0}$ is
\begin{equation}\label{ErdosTuran}
D_N(\{r^n \xi \}) \ll \frac{1}{H} + \sum_{t=1}^H \frac{1}{t} \left\vert \frac{1}{N} \sum_{n=1}^N e(r^n \xi t) \right\vert
\end{equation}
where $H$ is a natural number. Splitting the exponential sum as before and upon putting $H = \log N$, we thus obtain
\begin{equation*}
D_N(\{r^n \xi \}) \ll  \frac{\log \log N}{\log N}
\end{equation*}
where the implied constant depends on the base $r$.
\end{proof} 

\subsection{Modifying Schmidt's Algorithm.}\label{Modifying_Schmidt}

We show that it is possible to modify Schmidt's algorithm for a given $A>0$ to output an absolutely normal number $\xi$, depending on $A$, with discrepancy $D_N(\{r^n \xi \}) = O_r( \frac{\log \log N}{(\log N)^A})$ to base $r$, where the implied constant depends on $r$. This convergence is simultaneously faster than the speed of convergence to normality of most constructions of normal numbers (to a single base) by concatenations of blocks (see for example \cite{drmota_tichy} and \cite{schiffer1986:discrepancy_of_normal_numbers}).

\begin{Prop}
Fix $0<c<1$. Schmidt's algorithm still holds when the symbol $\langle m \rangle$ is replaced by the function
\begin{equation*}
\langle m \rangle = \lfloor e^{m^c} \rfloor.
\end{equation*}
\end{Prop}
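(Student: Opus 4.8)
The plan is to re-run Schmidt's entire construction verbatim with the new symbol $\langle m \rangle = \lfloor e^{m^c} \rfloor$ in place of $\lfloor e^{\sqrt m} + 2s_1 m^3 \rfloor$, and check that every estimate in which the old symbol entered still goes through. Since $\langle m \rangle$ only controls the locations and lengths of the digit blocks $[a_m, b_m]$ and the truncation parameters $\langle m; x\rangle$, the combinatorial and arithmetic core of the argument — in particular Lemma \ref{A_m'_cancellation_lemma}, which is proved block-by-block and only uses that $l \ge s^K$ in the application of Lemma \ref{Schmidt:HS5} — is untouched. So the work is entirely in re-deriving the growth and ratio properties of the new symbol.

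First I would record the elementary facts about $f(m) = e^{m^c}$ for $0<c<1$: it is increasing, $f(m)\to\infty$, and crucially $m^c$ is concave, so that $\langle m+1\rangle / \langle m\rangle = e^{(m+1)^c - m^c}(1+o(1))$ with $(m+1)^c - m^c \le c\, m^{c-1} \to 0$; hence $\langle m+1 \rangle - \langle m \rangle = \langle m \rangle \cdot O(m^{c-1})$, the exact analogue of \ref{ErrorNviabm}. This is the one place the precise shape of the exponent matters, and concavity of $t\mapsto t^c$ is what makes it work; the lower-order term $2s_1 m^3$ in the original symbol was there only to guarantee $b_m > a_m$ and the nesting $a_{m+1}\ge b_m$, which one must re-check but which holds for $m$ large since $\langle m+1\rangle - \langle m\rangle \to \infty$ (and one can pad with finitely many repetitions, or note $e^{m^c}$ grows faster than any polynomial's worth of needed slack once $m$ is large). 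Then $a_m = \lfloor \langle m\rangle / \log s_m\rfloor$ and $b_m = \lfloor \langle m+1\rangle/\log s_m\rfloor$ satisfy the same relations as before because $\log s_m \le \log(m s_1) = O(\log m)$ is negligible against $\langle m\rangle$.

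Next I would run Lemma \ref{A_m'_cancellation_lemma}'s conclusion \ref{A_m'estimate} and the convergence argument: the bound $A_m'(\xi) \ll m^2 (\langle m+1\rangle - \langle m\rangle)^{2-\beta_m}$ is stated purely in terms of $\langle \cdot \rangle$, so it is inherited, and summing the Weyl contributions as in the displayed chain in the discrepancy theorem gives a first-sum bound $\ll M^2 \langle M\rangle^{1-\beta_M/2}$, i.e. $\ll M^2 e^{M^c - (\beta_1/2) M^{c - 1/4}}$ (using $\beta_M \ge \beta_1 M^{-1/4}$ — note the paper's setup actually gives the $M^{-1/4}$ decay, which still tends to zero slower than $M^c$ grows, so the exponent $M^c - (\beta_1/2)M^{c-1/4}\to\infty$ but the correction $M^{c-1/4}$ is unbounded, giving a genuine power saving). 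One also needs the error-of-approximation estimate: with $M \asymp (\log(\log r\cdot N))^{1/c}$ chosen so that $\langle M;r\rangle \le N < \langle M+1;r\rangle$, equation \ref{ErrorNviabm}'s analogue gives $\langle M+1;r\rangle - \langle M;r\rangle = \langle M;r\rangle\cdot O(M^{c-1}) \ll N/(\log N)^{(1-c)/c}$ (using $M^{1/c}\asymp \log N$), which is $o(N)$, so the second Weyl sum still dominates. Feeding this into Erdős–Turán with $H = \log N$ yields a discrepancy bound, and Weyl's criterion (the first sum being $o(N)$) gives that $\xi$ is normal to every base, hence absolutely normal; so "Schmidt's algorithm still holds."

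The main obstacle I expect is bookkeeping of the nesting conditions $a_m \le b_m \le a_{m+1}$ and the requirement $l \ge s^K$ feeding Lemma \ref{Schmidt:HS5} in the proof of Lemma \ref{A_m'_cancellation_lemma}: with the slimmed-down symbol one loses the explicit polynomial slack $2s_1 m^3$, so one must verify that $\langle m+1\rangle / \langle m \rangle$ still exceeds, say, $2\log s_{m+1}$ for all large $m$ (true since the ratio $\to\infty$) and handle small $m$ by the usual repetition trick already invoked in the setup. Everything else is a transcription of Schmidt's estimates with $e^{\sqrt m}$ replaced by $e^{m^c}$ and $\sqrt{M+1}-\sqrt M \le 1/(2\sqrt M)$ replaced by $(M+1)^c - M^c \le c M^{c-1}$.
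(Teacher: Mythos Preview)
Your proposal misidentifies where the actual work lies. You claim that Lemma \ref{A_m'_cancellation_lemma} is ``untouched'' and that the bound $A_m'(\xi) \ll m^2(\langle m+1\rangle - \langle m\rangle)^{2-\beta_m}$ is ``inherited'' because it is ``stated purely in terms of $\langle\cdot\rangle$''. But a statement being phrased in terms of the symbol does not mean its \emph{proof} is insensitive to the symbol's specific shape. In fact the paper's proof of the Proposition consists of exactly one thing: re-verifying \ref{A_m'estimate}. Inside Schmidt's proof of Hilfssatz 7 there is a step where the inner sum over $j$ is estimated trivially on a range of size $O(m)$, and this is only harmless if $m \le \delta(\langle m+1\rangle - \langle m\rangle)^{1-\epsilon}$ for some $\delta>0$ and $0<\epsilon<1$. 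With the original symbol this held because of the polynomial slack $2s_1 m^3$; with the new symbol $\lfloor e^{m^c}\rfloor$ it must be re-proved, and the paper does so by showing $e^{(m+1)^c}-e^{m^c} \gg m^{1+\eta}$ for any $\eta>0$.

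You do observe that $\langle m+1\rangle - \langle m\rangle \to\infty$, but that is too weak for this purpose: what is needed is super-linear polynomial growth of the gap. Conversely, much of what you propose to check (the Erd\H{o}s--Tur\'an step, the second-sum domination, the explicit discrepancy bound) is not part of the Proposition at all --- those computations appear \emph{after} the Proposition in Section \ref{Modifying_Schmidt} and are separate from the claim that the algorithm ``still holds''. So your plan spends effort on things outside the statement while waving through the one estimate that genuinely depends on the redefinition.
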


Note that the symbols $\langle m;r \rangle$, $a_m$ and $b_m$ and also the construction of the sets $\sigma_m$ have to be modified accordingly. The algorithm works in exact the same way, but the output will depend on $c$.

\begin{proof}
We need to show that the estimate \ref{A_m'estimate} for $A_m'$ is still valid with this choice of $\langle m \rangle$. In course of the proof of this estimate, Schmidt evaluates the inner sum over $j$ in $A_m'$ trivially on a range of size $O(m)$. This range constitutes only a minor part of the full sum over $j$ since $m \leq \delta (\langle m+1 \rangle - \langle m \rangle)^{1-\epsilon}$ for some $\delta>0$ and some $0<\epsilon<1$. This can be seen from
\begin{align*}
e^{(m+1)^c} - e^{m^c} &= e^{m^c} \left(e^{(m+1)^c - m^c} - 1 \right) \\
&\geq e^{m^c} \left( (m+1)^c - m^c \right) \\
&\gg c m^{c\alpha} m^{c-1}
\end{align*}
since $e^{m^c} \gg m^{c \alpha}$ for any $\alpha >0$. Choosing $\alpha  = \frac{2}{c}-1 + \eta$ for some $\eta>0$ gives
\begin{equation*}
\langle m+1 \rangle - \langle m \rangle \gg c m^{c\alpha} m^{c-1}  = c m^{1+\eta}
\end{equation*}
which establishes our claim.
\end{proof}

The  discrepancy of $\xi = \xi_c$ can be estimated the same way as before. Note that any $N$ large enough can now be approximated by the symbol $\langle M \rangle$ with error
\begin{equation*}
O(\langle M+1 \rangle - \langle M \rangle) \ll e^{M^{c}}\frac{1}{M^{1-c}}
\end{equation*}
which with  $M$ of order  $(\log N)^{1/c}$ is
\begin{equation*}
\frac{N}{(\log N)^{\frac{1-c}{c}}}.
\end{equation*}
Hence the discrepancy of the sequence $\{r^n \xi\}_{n\geq 0}$ satisfies
\begin{equation}
D_N(\{r^n \xi\}) \ll \frac{\log \log N}{(\log N)^A}
\end{equation}
with $0<A=\frac{1-c}{c}<\infty$.

\section{The Constants $a_{20}$ in Schmidt's Hilfssatz 5}\label{section:constants}

In this section we will prove the following explicit variant of Schmidt's Hilfssatz 5 in \cite{schmidt1961:uber_die_normalitat}.

\begin{Lem}[Explicit variant of Lemma \ref{Schmidt:HS5}]\label{HS5_explicit}
Let $r$ and $s$ be integers greater than $1$ such that $r\not\sim s$. Let $K, l$ be positive integers such that $l \geq s^K$ and denote $m=\max(r,s)$. Then for 
\begin{equation}
N \geq N_0(r,s) = \exp(288\cdot(12 m (\log m)^4 + 8 (\log m)^3 + (\log m)^2))
\end{equation}
we have
\begin{equation}
\sum_{n=0}^{N-1} \prod_{k=K+1}^\infty \vert \cos(\pi r^n l /s^k)\vert \leq 2N^{1-a_{20}} \label{HS5:sum_explicit_variant}
\end{equation}
for some positive constant $a_{20}$ as specified in \ref{a_20} that satisfies
\begin{equation}
a_{20} \approx 0.0057 \cdot \frac{1}{s^4 \log s} \left( \frac{1}{\log s} - \frac{1}{s}\right).
\end{equation}
\end{Lem}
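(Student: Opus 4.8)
The strategy is to retrace Schmidt's proof of Hilfssatz 5 in \cite{schmidt1961:uber_die_normalitat} line by line and, at every step where he invokes a qualitative bound of the form "for $N$ large enough" or "for some constant", substitute an explicit numerical inequality. The heart of Schmidt's argument is the observation that for $r\not\sim s$ the numbers $r^n l/s^k$ are, on average over $n$, not too close to integers, so that a positive proportion of the factors $|\cos(\pi r^n l/s^k)|$ in the infinite product are bounded away from $1$. Concretely, one first truncates the infinite product: since $l\ge s^K$, for $k$ much larger than $K+\log_s(r^n)$ the argument $r^n l/s^k$ is tiny and $|\cos|$ is essentially $1$, so the product is comparable to a finite product over a window of $k$'s of length $O(n\log r/\log s)$. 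Then one estimates $\prod |\cos(\pi r^n l/s^k)|$ by $\exp(-c\sum \|r^n l/s^k\|^2)$ using $|\cos\pi x|\le\exp(-2\|x\|^2)$, reducing the task to a lower bound for a sum of squared distances to the nearest integer. The multiplicative independence of $r$ and $s$ is what forces this sum to be of order $n$ for most $n$; quantitatively this is governed by how fast $|n\log r - j\log s|$ can be, i.e. by a (here effective, even trivial) lower bound for linear forms in $\log r$ and $\log s$, and this is where the dependence on $m=\max(r,s)$ and on $\log m$ enters. Summing the resulting individual bounds $N^{-a}$ over $n=0,\dots,N-1$ and controlling the exceptional set of "bad" $n$ then yields the bound $2N^{1-a_{20}}$, with the value of $a_{20}$ in \eqref{a_20} read off from the constants accumulated along the way, and $N_0(r,s)$ chosen just large enough that all the "for $N$ large" thresholds (coming from the truncation error, from the size of the exceptional set, and from absorbing lower-order terms) are met simultaneously; the triple-logarithmic shape $\exp(\text{poly}(\log m))$ of $N_0$ and the leading factor $288$ are exactly the price of making these thresholds uniform.

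In more detail, the steps I would carry out in order are: (1) fix $r,s,K,l$ with $l\ge s^K$ and, for each $n$, identify the range $K< k\le K+k_n$ of indices contributing non-trivially to the product, with $k_n=\lceil n\log r/\log s\rceil+O(1)$, bounding the tail $\prod_{k>K+k_n}|\cos(\pi r^n l/s^k)|\ge 1-O(s^{-k_n})$ explicitly; (2) apply $|\cos\pi x|\le e^{-2\|x\|^2}$ to the remaining finite product to get $\prod_{k}|\cos(\pi r^n l/s^k)|\le \exp(-2 S_n)$ with $S_n=\sum_{k} \|r^n l/s^k\|^2$; (3) show that for all $n$ outside an exceptional set $E\subset\{0,\dots,N-1\}$ one has $S_n\ge c(r,s)\,n$, by using $r\not\sim s$ to rule out $s^k\mid r^n l$ and by an effective counting argument bounding $|E|$ — this is the combinatorial core and the place where the constant $c(r,s)\asymp \frac{1}{s^4\log s}(\frac{1}{\log s}-\frac1s)$ is pinned down; (4) split $\sum_{n=0}^{N-1}\prod_k|\cos(\pi r^n l/s^k)| \le |E| + \sum_{n\notin E} e^{-2c(r,s)n}$ and bound each piece, choosing $N_0(r,s)$ so that $|E|\le N^{1-a_{20}}$ and the geometric sum is $\le N^{1-a_{20}}$; (5) collect constants and verify the claimed numerical values of $a_{20}$ and $N_0(r,s)$.

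The main obstacle will be step (3): making Schmidt's exceptional-set estimate fully quantitative, i.e. proving that the number of $n\le N$ for which the squared-distance sum $S_n$ fails to grow linearly is itself $\le N^{1-a_{20}}$, with an explicit $a_{20}$. Schmidt's original argument at this point is a clean but somewhat indirect averaging/pigeonhole argument over the digits of $r^n l$ in base $s$, and one has to be careful to track (a) the dependence on $K$ and $l$ cancelling out (the bound must be uniform in $K,l$ subject to $l\ge s^K$), and (b) the interaction between the length $k_n$ of the relevant window and the density of "good" scales $k$ within it, since $c(r,s)$ degrades like $s^{-4}$. Everything else — the cosine inequality, the tail truncation, summing a geometric series, and choosing $N_0$ — is routine bookkeeping once the constant in step (3) is secured; I would therefore devote most of the write-up to making that counting argument explicit and present the rest as straightforward estimates.
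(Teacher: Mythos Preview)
Your outline captures the broad shape of the argument --- truncate the infinite product, use factors bounded away from $1$ to get decay, control an exceptional set of $n$ --- but step (3) misidentifies both the strength of the bound and the mechanism that produces it, and this is precisely where all the explicit constants live.

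First, the quantitative claim $S_n\ge c(r,s)\,n$ is too strong. Schmidt's argument (and the paper's explicit version of it) only establishes that for all but $N^{1-a_{14}}$ indices $n<N$ the number $r^n l$ has at least $a_{15}\log N$ \emph{nice digit pairs} in its base-$s$ expansion beyond position $K$ (a nice pair being two consecutive digits not both $0$ and not both $s-1$). Each nice pair at position $k$ forces $\|r^n l/s^k\|\ge 1/s^2$, so in your notation one only gets $S_n\gg s^{-4}\log N$, not $\gg n$. This is exactly sufficient: for good $n$ the product is $\le \cos(\pi/s^2)^{a_{15}\log N}=N^{-a_{22}}$, and summing over $n<N$ gives $N^{1-a_{22}}$; combined with the exceptional count $N^{1-a_{14}}$ one obtains $2N^{1-a_{20}}$ with $a_{20}=\min(a_{14},a_{22})$. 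If $S_n\ge cn$ held, the geometric sum in your step (4) would be $O(1)$, which is not what the lemma claims and not what the method delivers.

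Second, and more seriously, the mechanism is not a lower bound for $|n\log r-j\log s|$; linear forms in logarithms play no role in Schmidt's proof. The argument is purely multiplicative and combinatorial. One factors $r=\prod p_i^{d_i}$, $s=\prod p_i^{e_i}$, forms the ratios $t_i=r^{e_i}/s^{d_i}$ (which in lowest terms are coprime to $p_i$), and uses the multiplicative order of $t_i$ modulo powers of $p_i$ (Hilfssatz~1) to control how the residues $l t_i^n\bmod s^k$ distribute. This is combined with an elementary count, imported from \cite{schmidt1960:on_normal_numbers}, of how many length-$k$ strings in base $s$ have fewer than $a_4 k$ nice pairs --- at most $2^{3k/4}$ once one fixes $a_4\approx 0.028/\log(s^2-2)$. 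Hilfss\"atze~2, 3, and 4 then successively upgrade this to the statement about $r^n l$ with $0\le n<N$, each stage costing explicit but trackable factors and each imposing its own threshold on $N$. The paper's proof is literally a line-by-line pass through these Hilfss\"atze, bounding the auxiliary constants $a_1,a_2,a_3,a_4,a_5,a_6,a_7,a_8,a_9,a_{14},a_{15}$ one after another and solving for the thresholds $N_0^{\text{HS2}},N_0^{\text{HS3}},N_0^{\text{HS4}}$. The shape of $N_0(r,s)$ and the value of $a_{20}$ fall out of this bookkeeping --- for instance, the leading term $m(\log m)^4$ in the exponent of $N_0$ comes from the bound $g_i\le 12m(\log m)^2$ on a $p$-adic exponent feeding into $a_2$, not from any Diophantine input. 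Your plan as written does not engage with this machinery, and without it there is no route to the specific constants in the statement.
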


\begin{Rem}
The statement of Lemma \ref{HS5_explicit} holds true \emph{for all} $N$ with
\begin{equation}\label{a_20_trivialized}
a_{20} = \min \left( \frac{1}{N_0 \log N_0}, \frac{-\log \cos (\frac{\pi}{s^2})}{2\log N_0} \right)
\end{equation}
as specified in \ref{a_20_all_N}.
\end{Rem}

This enables us in principle to give an explicit description of the sequences $(r_i)_{i\geq 1}$ and $(s_j)_{j\geq 1}$ \emph{after} the repetition of the entries via the function $\phi$ as suggested by Schmidt. Lemma \ref{HS5_explicit} might also be of independent interest as its non-explicit variant has been used by several authors, see e.g. \cite{becher_bugeaud_slaman2013:simply_normal} and \cite{Becher_slaman2014:on_the_normality_of_numbers_to_different_bases}.
We do not claim optimality of the bounds in Lemma \ref{HS5_explicit}.

\begin{proof}
The proof is basically a careful line-by-line checking of Schmidt's proof of Lemma \ref{Schmidt:HS5}. The reader might find it helpful having a copy both of \cite{schmidt1960:on_normal_numbers} and \cite{schmidt1961:uber_die_normalitat} at hand.

We follow Schmidt's notation and his argument in \cite{schmidt1961:uber_die_normalitat}.

Let
\begin{align*}
r &= p_1^{d_1} \cdot \ldots \cdot p_h^{d_h}, \\
s &= p_1^{e_1} \cdot \ldots \cdot p_h^{e_h}
\end{align*}
be the prime factorizations of $r$ and $s$ with $d_i$ and $e_i$ not both equal to zero. We assume the $p_i$ to be ordered such that
\begin{equation*}
\frac{d_1}{e_1} \geq \ldots \geq \frac{d_h}{e_h},
\end{equation*}
with the convention that $\frac{d}{0} = + \infty$. This implies that $d_k e_l - d_l e_k \geq 0$ for all $k \geq l$.

Let $b = \max_i(d_i ) \cdot \max_i(e_i)$. Schmidt denotes by $l_i$ numbers not divisible by $p_i^{2b}$.

For $1 \leq i \leq h$, let
\begin{align*}
u_i &= (p_1^{d_1} \ldots p_i^{d_i})^{e_i} (p_1^{e_i}\ldots p_i^{e_i})^{-d_i} \\ 
v_i &= (p_{i+1}^{e_{i+1}} \ldots p_h^{e_h})^{d_i} (p_{i+1}^{d_{i+1}} \ldots p_h^{d_h})^{-e_i},
\end{align*}
where empty products (for $i=h$) are $1$. These numbers are integers, and $t_i = \frac{u_i}{v_i}$ is not equal to $1$ since $r\not\sim s$. We have $t_i = \frac{r^{e_i}}{s^{d_i}}$, hence, when writing $t_i$ in lowest terms, the prime $p_i$ has been cancelled.

Let $f_i = p_i-1$ if $p_i$ is odd, and $f_i = 2$ otherwise. There are well-defined integers $g_i$ such that
\begin{equation*}
t_i^{f_i} \equiv 1 + q_i p_i^{g_i - 1} \pmod{p_i^{g_i}}
\end{equation*}
with $p_i \ndiv q_i$ (especially $q_i \neq 0$). We have $g_i > 1$ by the small Fermat theorem and for $p_i=2$ we even have $g_i>2$ since squares are congruent $1$ modulo $4$.
 To give an upper bound for $g_i$, note that  $p_i^{g_i}$ can be at most equal to $t_i^{f_i}$. 
 Hence $g_i \leq \lfloor f_i \frac{\log t_i}{\log p_i} \rfloor + 1$.
  Since naively $\log p_i \geq \log 2$, $\log t_i = e_i \log r - d_i \log s \leq e_i \log r \leq \frac{\log r \log s}{\log 2}$ and $p_i \leq \max(r,s)$, a trivial upper bound on $g_i$, valid for all $i$, is
\begin{equation*}
g_i \leq 12 \max(r,s) \log r \log s.
\end{equation*}

Let $a_1 = \max (g_1, \ldots, g_h)$. Then
\begin{equation}
2  \leq a_1 \leq 12 \max(r,s) \log r \log s.
\end{equation}

Assume $k \geq a_1$, $e_i > 0$. The constant $a_2$ is such that at most $a_2 (s/2)^k$ of the numbers $l_i t_i^n$ fall in the same residue class modulo $s^k$ if $n$ runs through a set of representatives modulo $s^k$ (Hilfssatz 1 in \cite{schmidt1961:uber_die_normalitat}). At most $p_i^{2b} p_i^{g_i}$ of the numbers $t_i^n l_i$ fall in the same residue class modulo $p_i^k$ if $n$ runs through a set of representatives modulo $p_i^k$. If $p_i | s$, then there are at most $(s/2)^k$ elements in a set of representatives modulo $s^k$ that are congruent to each other. Hence $a_2 = \max_{i, e_i>0} p_i^{2b + g_i}$. Naive upper and lower bounds on $a_2$ are thus
\begin{equation}
8 \leq a_2 \leq \max(r,s)^{8 \log(\max(r,s)) + 12 \max(r,s) \log r \log s}.
\end{equation}

The constant $a_4$ (named $\alpha_3$ in \cite{schmidt1960:on_normal_numbers}) is chosen  such that
\begin{equation}\label{a_4}
(s^2-2)^{a_4} < 2^{1/4 + 2a_4} a_4^{a_4} (1-2a_4)^{1/2 - a_4}.
\end{equation}
The right-hand side of \ref{a_4} as a function of $a_4$ (denote it by $f(a_4)$) can be numerically analyzed. It is a strictly decreasing continuous function on the interval $(0, 1/16]$ with values $f(0^+) = \sqrt[4]{2} \approx 1.19 > f(1/16) \approx 1.028 >1$. Hence any $a_4$ in $(0,1/16)$ that satisfies
\begin{equation}\label{a_4modified}
(s^2-2)^{a_4} \leq f(1/16)
\end{equation}
also satisfies \ref{a_4}. Note that $a_4 < 1/16$ is no proper restriction as $a_4(2) \approx 0.055 < 1/16$ and since $a_4$ is decreasing in $s$. Now, \ref{a_4modified} is easy to solve and gives
\begin{equation*}
a_4(s) \geq \frac{c}{\log(s^2-2)}
\end{equation*}
for $c= \log(f(1/16)) \approx 0.028$. This constitutes a non-trivial (i.e. positive) lower bound on the values of $a_4$ that are admissible. To simplify matters we continue with this value for $a_4$, i.e. we put
\begin{equation}
a_4 = \frac{0.028}{\log(s^2-2)}.
\end{equation}

The constant $a_3$ also comes from the earlier Schmidt paper \cite{schmidt1960:on_normal_numbers} and was called $\alpha_4$ there. Schmidt counts the number blocks of digits in base $s$ with few `nice' digit pairs. These are successive digits not both equal to zero or $s-1$. He derives the proof of Lemma 3 in \cite{schmidt1960:on_normal_numbers} that the number of combinations of $k$ base $s$ digits with less than $\alpha_3 k (= a_4 k)$ nice digit pairs, counting only non-overlapping pairs of digits, does not exceed
\begin{equation}\label{number_of_digit_pairs_odd_indices}
k \binom{\frac{k}{2}}{\lfloor a_4 k \rfloor} (s^2 - 2)^{\lfloor a_4 k \rfloor} 2^{k/2 - \lfloor a_4 k \rfloor} .
\end{equation}
With the approximation
\begin{equation*}
\sqrt{2 \pi} n^{n+1/2} e^{-n} \leq n! \leq e n^{n+1/2} e^{-n}
\end{equation*}
we find that \ref{number_of_digit_pairs_odd_indices} is
\begin{align}
& \leq k  \frac{e}{2 \pi}  
\frac{\left( \frac{k}{2}\right)^{k/2 + 1/2}   }{(a_4k)^{a_4k + 1/2} \left(\frac{k}{2}-a_4 k \right)^{k/2-a_4k+1/2} }
(s^2-2)^{a_4k} 2^{k/2 - a_4 k} 2^{-1}\nonumber \\
&=  \frac{e}{2 \pi}
\frac{1}{\sqrt{a_4} \sqrt{1-2a_4}} \frac{1}{2} \cdot
\frac{1}{(2a_4)^{a_4k} (1-2a_4)^{(1/2-a_4)k}}. \label{Schmidt_lemma_3_ineq}
\end{align}
In \cite{schmidt1960:on_normal_numbers}, Schmidt denotes the constant factor by $\alpha_5$,
\begin{equation*}
\alpha_5 = \frac{e}{4 \pi \sqrt{a_4(1-2a_4)}}.
\end{equation*}
Using $a_4 \leq 0.055$ and $a_4 \geq \frac{c}{\log(s^2-2)}$ with $c \approx 0.028$ we obtain the upper bound
\begin{equation}
\alpha_5 \leq 1.87 \sqrt{\log s} \approx 2 \sqrt{\log s}.
\end{equation}
Finally, $a_3$ is such that if $k \geq a_3$, and respecting the choice of $a_4$, then
\begin{equation*}
\alpha_5 k \frac{(s^2-2)^{a_4k} 2^{(1/2-a_4)k}}{(2a_4)^{a_4k}(1-2a_4)^{(1/2-a_4)k}} < 2^{3/4 \cdot k}
\end{equation*}
holds. The left-hand side is equal to
\begin{equation*}
\alpha_5 k  \left( \frac{ (s^2-2)^{a_4}}{f(a_4)} 2^{3/4} \right)^k 
= \alpha_5 k \left( \frac{f(1/16)}{f(a_4)} 2^{3/4} \right)^k
\leq \alpha_5 k 2^{0.74 k}
\end{equation*}
by the choice of $a_4$ and since $f(a_4) \geq f(0.055) > f(1/16)$. Using $\log(x) \leq x^{1/2}$ for all $x\geq0$,
\begin{equation*}
\alpha_5 k 2^{0.74 k} < 2^{3/4\cdot k}
\end{equation*}
is satisfied for all $k$ larger than
\begin{equation}
a_3 = 120 \sqrt{\log(s)}.
\end{equation}

Let $N \geq \max(s^{a_1}, s^{a_3+1})$ (hence certainly $N \geq s^2$ since $a_1 \geq 2$) and let $k$ be such that $s^k \leq N < s^{k+1}$. The constants $a_7$ and $a_5$ in Hilfssatz 2 in \cite{schmidt1961:uber_die_normalitat} are such that
\begin{equation*}
a_2 (s/2)^k s 2^{3k/4} < a_7 N^{1-a_5}.
\end{equation*}
With $k > \frac{\log N}{\log s}-1$ the left-hand side is
\begin{equation*}
< a_2 s N^{1-\frac{\log 2}{4}\left(\frac{1}{\log s}-\frac{1}{\log N}\right)}
\end{equation*}
which is
\begin{equation}\label{a_7}
\leq a_2 s N^{1-\frac{\log 2}{8 \log s}}
\end{equation}
due to $N\geq s^2$. Hence $a_7 = a_2 s$. We want \ref{a_7} to be $< N^{1-\frac{\log 2}{16 \log s}}$, hence 
\begin{equation}
a_5 = \frac{\log 2}{16 \log s} > 0. 
\end{equation}
This happens, if 
\begin{equation*}
\frac{\log(a_2 s)}{\log N} < \frac{\log 2}{16 \log s}
\end{equation*}
which is satisfied when $N \geq N_0^{\text{HS2}}$, where
\begin{equation}\label{N_large}
\log N_0^{\text{HS2}} = 288m (\log m)^4 + 192 (\log m)^3 + 24 (\log m)^2
\end{equation}
where we denoted $m = \max(r,s)$. Note that in particular $N$ is much larger than $e^s$. 

The constant $a_6$ is such that $a_4 k > a_6 \log N$. With $a_4 \geq \frac{0.028}{\log(s^2-2)} > \frac{0.014}{\log s}$ and $k > \frac{\log N}{\log s} -1$ and due to $N \geq e^s$, we have
\begin{equation*}
a_4 k > \log N \frac{0.014}{\log s} \left( \frac{1}{\log s} - \frac{1}{s} \right).
\end{equation*}
This is a positive value for all $s$. Hence
\begin{equation}
a_6 = \frac{0.014}{\log s} \left( \frac{1}{\log s} - \frac{1}{s}\right) >0
\end{equation}
is an admissible choice for $a_6$.

In Hilfssatz 3 in \cite{schmidt1961:uber_die_normalitat}, Schmidt divides the numbers $lr^n$ in at most $hb$ sequences each of which having a certain number of elements. If this number is $\leq N^{1/2}$, he counts trivially. If the number of elements in a sequence is larger than $N^{1/2}$, he uses Hilfssatz 2 with this $N$. Hence the $N$ in Hilfssatz 3 needs to be large enough such that $N^{1/2}$ is large enough for Hilfssatz 2. Thus, for
\begin{equation}
N \geq N_0^{\text{HS3}} = (N_0^{\text{HS2}})^2 = \exp(2\cdot(288m (\log m)^4 + 192 (\log m)^3 + 24 (\log m)^2))
\end{equation}
there are at most $hbN^{1-a_5}$ numbers of the $lr^n$ having less than $a_6 \log \sqrt{N}$ nice digit pairs. Hence
\begin{equation}
a_{9} = \frac{a_6}{2} = \frac{0.007}{\log s}\left( \frac{1}{\log s}- \frac{1}{s}\right).
\end{equation}
The number $h$ was defined as the number of distinct prime divisors in $rs$, for which a trivial bound is $h\leq \log_2(rs)\leq \frac{2 \log m}{\log 2}$ with $m = \max(r,s)$. Another trivial bound is $b = \max(d_i) \cdot \max(e_i) \leq (\log_2(m))^2$. Thus with $N \geq e^{288m}$, we have
\begin{equation*}
hb N^{1-a_5} \leq 7 (\log m)^3 N^{1-a_5} \leq  N^{1 - a_8}
\end{equation*}
with
\begin{equation}
a_8 = a_5 - \frac{\log 7 + 3 \log \log m}{288 m} = \frac{\log 2}{16 \log s} - \frac{\log 7 + 3 \log \log m}{288 m}.
\end{equation}

Recall from Schmidt's paper that $z_K(x)$ denotes the number of nice digit pairs $c_{i+1} c_i$ of $x$ with $i\geq K$ where the $c$ are the digits of $x$ in base $s$.

In Hilfssatz 4, Schmidt begins with the restriction $ n \geq N^{2/3} \log s/ \log r$ which reduces $a_{14}$ to a value less than $1/3$. The remaining numbers $lr^n$ are divided in at most $2N^{2/3}$ many intervals of length $\lfloor N^{1/3} \rfloor$ which are analyzed separately. 
The restriction $n \geq N^{2/3} \log s/ \log r$ implies $lr^n \geq s^{K + \lfloor N^{1/3} \rfloor^2}$.
Schmidt  wants to apply Hilfssatz 3 to intervals $N^{2/3} \log s/ \log r \leq n_0 \leq n < n_0 + \lfloor N^{1/3} \rfloor$ of length $\lfloor N^{1/3} \rfloor$. However, he makes one further preliminary reduction in showing that one can assume that $z_K(l)$ is less than $\frac{a_9}{2} \log N$. He denotes by $n_1$ the least $n$ such that $z_K(lr^n) <\frac{a_9}{2} \log N$ and replaces $lr^n$ for $n\geq n_1$ by $l^\ast r^{n-n_1}$ where $l^\ast = lr^{n_1}$. All $lr^n$ with $ n_0 \leq n < n_1$ are by the choice of $n_1$ such that $z_K(lr^n) \geq \frac{a_9}{2} \log N$. As Schmidt's version is not explicit, he can assume $N$ to be large enough, and apply Hilfssatz 3 to the interval $ n_1 \leq n < N$ (or $0 \leq n < N-n_1$ for numbers $l^\ast r^n$). To make things explicit, we distinguish three cases for the size of $n_1$. We write $M = \lfloor N^{1/3} \rfloor$ for the number of $lr^n$ under consideration. We want to find explicit lower bounds on $M$ such that we can apply Hilfssatz 3.

Case 0: $n_1$ does not exist at all. Then the number of $lr^n$ with $z_K$ less than $a_9 \log M$ is trivially less than $M^{1-a}$ for any $0<a<1$.

Case 1: $n_1$ is large such that the number of $lr^n$ with $z_K < a_9 \log M$ can be trivially estimated by $M-n_1 \leq M^{1-a_8}$. This is the case when $n_1 \geq M - M^{1-a_8}$.

Case 2: $n_1 < M - M^{1-a_8}$. We need the interval $M-n_1$ to be large enough to be able to apply Hilfssatz 3 to obtain cancellation, i.e. $M-n_1 \geq N_0^{\text{HS3}}$ which holds if $M \geq M_0 =  (N_0^{\text{HS3}})^{\frac{1}{1-a_8}}$. Thus by Hilfssatz 3 the number of $lr^n$, $n_0 \leq n < n_0 + M$, with $z_K < a_9 \log N$ is at most $(M - n_1)^{1-a_8} \leq M^{1-a_8}$.

Schmidt uses a reduction to count only $z_K$ instead of all nice digit pairs. This reduction looses at one point $2$ digit pairs, i.e. after an application of Hilfssatz 3 one finds numbers with at most $a_9 \log M - 2$ nice digit pairs. This is $\leq \frac{a_9}{2} \log M$ for
\begin{equation}\label{rand1}
\log M > \frac{4}{a_9}.
\end{equation}
Also, Schmidt's reduction works if
\begin{equation}\label{rand2}
M \frac{\log r}{\log s} < \left\lfloor \frac{M^2-1}{\frac{a_9}{2} \log M }\right\rfloor - 1.
\end{equation}
Note that \ref{rand1} and \ref{rand2} do not pose further restrictions on $M$.

Finally, from $M \geq (N_0^{\text{HS3}})^{\frac{1}{1-a_8}}$, $M = \lfloor N^{1/3} \rfloor$, and since we may assume that $a_8<\frac{1}{2}$, the requirement 
\begin{equation}\label{N_0HS4}
N \geq N_0^{\text{HS4}} = (N_0^{\text{HS3}})^{6} = \exp(288\cdot(12 m (\log m)^4 + 8 (\log m)^3 + (\log m)^2))
\end{equation}
for the original $N$ follows. We established that in each subsequence of length $\lfloor N^{1/3} \rfloor$ there are at most $\lfloor N^{1/3} \rfloor ^{1-a_8}$ elements $lr^n$ with $z_K < \frac{a_9}{2} \log \lfloor N^{1/3} \rfloor$.

In total, since there are at most $2N^{2/3}$ many intervals for $n$ of length $\lfloor N^{1/3} \rfloor$, we obtain (for $\log N \geq \frac{6 \log 3}{a_8}$) that there are at most
\begin{equation*}
N^{2/3} \frac{\log s}{\log r} + 2N^{2/3} \cdot \lfloor N^{1/3} \rfloor^{1-a_8} \leq 3 N^{1-a_8/3} \leq N^{1-a_8/6}
\end{equation*}
elements $r^n l$, $0\leq n < N$, with $z_K < \frac{a_9}{2}\log \lfloor N \rfloor^{1/3} \leq \frac{a_9}{6} \log N$. Thus
\begin{equation}
a_{14} = \frac{a_8}{6}, \quad a_{15} = \frac{a_9}{6}.
\end{equation}

From Hilfssatz 5 follows that $a_{20} = \min(a_{14}, a_{22})$ where $ a_{22} =  - a_{15} \log a_{21}$ with $a_{21} = \cos(\pi / s^2)$. We have $-\log a_{21} =- \log \cos(\frac{\pi}{s^2}) \geq \frac{\pi^2}{2} \frac{1}{s^4}$. Plugging in the values of $a_{14}$ and $a_{15}$ shows that $\min(a_{14}, a_{22}) = a_{22}$. Hence
\begin{equation}\label{a_20}
a_{20} = \frac{1}{6} \frac{\pi^2}{2} \cdot 0.007 \cdot \frac{1}{s^4} \frac{1}{\log s} \left( \frac{1}{\log s} - \frac{1}{s}\right)
\end{equation}
where the constant factor is approximately $0.0057$.

To find $a_{20}$ such that Lemma \ref{HS5_explicit} holds \emph{for all} $N$, we need to replace $a_{14}$ and $a_{15}$ by sufficiently small constants such that Hilfssatz 4 holds for all $N$. This can be achieved by 
\begin{equation*}
a_{14} \leftarrow \min(a_{14}, 1-\frac{\log(N_0-1)}{\log N_0}), \quad \text{and} \quad 
a_{15} \leftarrow \min(a_{15}, \frac{1}{2 \log N_0})
\end{equation*}
where we denoted $N_0 = N_0^{\text{HS4}}$. We have $a_{14}^{\text{old}} \approx 0.007 \frac{1}{\log s}$ and $1-\frac{\log(N_0-1)}{\log N_0} \leq \frac{2}{N_0 \log N_0}$ which decays worse than exponentially in $m$. Furthermore, $a_{15}^{\text{old}} \approx 0.001 \frac{1}{\log s}$ and $\frac{1}{2 \log N_0}$ is worse than linear with in $m$ a large constant. Note also $1-\frac{\log(N_0-1)}{\log N_0} \geq \frac{1}{N_0 \log N_0}$. Hence Hilfssatz 4 holds true for all $N$ with constants
\begin{equation}\label{a_14_and_a_15_for_all_N}
a_{14} = \frac{1}{N_0 \log N_0}, \quad \text{and} \quad a_{15} = \frac{1}{2\log N_0}
\end{equation}
with $N_0 = N_0^{\text{HS4}}$ as in \ref{N_0HS4}.

The constant $a_{20}$ then modifies according to
\begin{equation}\label{a_20_all_N}
a_{20} = \min(a_{14}, a_{22}) =  \min \left( \frac{1}{N_0 \log N_0}, \frac{-\log \cos (\frac{\pi}{s^2})}{2\log N_0} \right).
\end{equation}
For large $m$, $a_{20}$ will equal $a_{14}$ but since $a_{22} \geq \frac{\pi^2}{4} \frac{1}{s^4 \log N_0}$, for small $m$ we have $a_{20} = a_{22}$. Explicitly, with $a_{14} \leq \frac{1}{e^m 1728 m}$ we have
\begin{equation}
a_{20} = \frac{1}{N_0 \log N_0}
\end{equation}
for $m \geq 7$ were we denoted $m=\max(r,s)$ and $N_0 = N_0^{\text{HS4}}$ as given in \ref{N_0HS4}.
\end{proof}

\appendix
\section{Algorithms by Sierpinski and Turing}
\subsection{Sierpinski's Algorithm}\label{Sierpinksi_section}

In this section we will estimate the runtime and discrepancy of the effective version of Sierpinsk's algorithm \cite{sierpinski1917:borel_elementaire} by Becher and Figueira \cite{becher_figueira:2002}. This algorithm is double exponential but with discrepancy $O(\frac{1}{N^{1/6}})$.

Let $0 < \epsilon \leq \frac{1}{2}$ be a rational (or computable real) number that will be fixed throughout the algorithm. We also choose in advance a base $b\geq 2$. The algorithm will compute the digits to base $b$ of an absolutely normal number $\nu$. The output (i.e. $\nu$) depends on the choice of $\epsilon$ and $b$.

\subsubsection*{Notation}
Let $m$, $q$, $p$ be integers such that $m\geq 1$, $q\geq 2$ and $0\leq p \leq q-1$ and put $n_{m,q} = \lfloor \frac{24m^6q^2}{\epsilon}\rfloor + 2$.

Let $\Delta_{q,m,n,p}$ be the interval $(\frac{0.b_1 \ldots b_{n-1}(b_n-1)}{q^n}, \frac{0.b_1 \ldots b_{n-1}(b_n+2)}{q^n})$ where the string $b_1\ldots b_n$ is such that the digit $p$ appears too often, i.e. $\vert \frac{N_p(b_1 \ldots b_n)}{n}- \frac{1}{q}\vert \geq \frac{1}{m}$ where $N_p(b_1 \ldots b_n)$ denotes the number of occurrences of the digit $p$ amongst the $b_i$.

Let
\begin{equation*}
\Delta = \bigcup_{q=2}^\infty \bigcup_{m=1}^\infty \bigcup_{n=n_{m,q}}^\infty \bigcup_{p=0}^{q-1} \Delta_{q,m,n,p},
\end{equation*}
and denote a truncated version of $\Delta$ by
\begin{equation*}
\Delta_k = \bigcup_{q=2}^{k+1} \bigcup_{m=1}^k \bigcup_{n=n_{m,q}}^{k\cdot n_{m,q}} \bigcup_{p=0}^{q-1} \Delta_{q,m,n,p}.
\end{equation*}
The complement of $\Delta$ in $[0,1)$ is
\begin{equation*}
E = [0,1) \setminus \Delta.
\end{equation*}

Sierpinski's algorithm will compute the digits to base $b$ of a number $E$. This number is absolutely normal as shown by Sierpinski and in Theorem 7 in \cite{becher_figueira:2002}.

The truncated sets $\Delta_k$ approximate $\Delta$ in the sense that if a number does not lie in $\Delta_k$ for large enough $k$, then it will also not lie in $\Delta$. Becher and Figueira's algorithm computes the digits of $\nu$ such that the $n$-th digit ensures that $\nu$ does not lie in some $\Delta_{p_n}$, where $p_n \rightarrow \infty$.

\subsubsection*{The Algorithm}

First digit: Split the unit interval in subintervals $c^1_d = [\frac{d}{b}, \frac{d+1}{b})$ for $0\leq d < b$. Put $p_1 = 5 \cdot (b-1)$. Compute the Lebesgue measure of $\Delta_{p_1} \cap c_d^1$ for all $d$. The first digit $b_1$ of $\nu$ is chosen such that it is (the smallest among) the $d$ such that the Lebesgue measure of $\Delta_{p_1} \cap c_{b_1}^1$ is minimal among the $\Delta_{p_1} \cap c_d^1$.
\medskip

$n$-th digit: Split the interval $[0.b_1 \ldots b_{n-1}, 0. b_1 \ldots (b_{n-1} +1) )$ in subintervals $$c_d^n = [0.b_1 \ldots b_{n-1} d, 0.b_1 \ldots b_{n-1} (d+1))$$ for all $0\leq d < b$. Put $p_n = 5 \cdot (b-1) \cdot 2^{2n-2}$. 
The $n$-th digit $b_n$ of $\nu$ will be the (smallest of the) $d$ that minimize the Lebesgue measure of the $\Delta_{p_n} \cap c_d^n$.

\subsection{Runtime}

For fixed $q$, $m$, $n$ and $p$, writing down all strings $b_1 \ldots b_n$ of length $n$ of digits $0\leq b_i < b$ that satisfy the conditions of $\Delta_{q,m,n,p}$ takes exponential time in $n$. Naively estimating gives the complexity of computing $\Delta_k$ as being exponential in $k$. Since in step $n$ we have chosen $k=p_n$ which is exponential in $n$, the complexity of computing $\Delta_{p_n}$ takes double exponential elementary operations.

\subsection{Discrepancy}

We will give an estimate for the discrepancy of a generic element of the set $E$, not taking into account that the algorithm might in fact construct an element with better distributional properties.

The family of intervals $\bigcup_{p=0}^{q-1} \Delta_{b,m,n,p}$ contains all real numbers with expansion to base $b$ not simply normal regarding the first $n$ digits. The union
\begin{equation*}
\Delta_{q,m} = \bigcup_{n=n_{m,q}}^\infty \bigcup_{p=0}^{q-1} \Delta_{q,m,n,p}
\end{equation*}
contains all real numbers whose base-$q$ expansion is not simply normal for any large enough number of digits considered. Hence any $\nu$ not in $\Delta_{q,m}$ satisfies
\begin{equation*}
\left\vert \frac{\sharp \{ \{q^n \nu \} \in I \mid n\leq N \}}{N} - \vert  I \vert \right\vert < \frac{1}{m}
\end{equation*}
for all $N\geq n_{m,q}$ and $I$ of the form $I = [\frac{p}{q}, \frac{p+1}{q})$ , $p=0, \ldots, q-1$. 

Inverting the relation between $N$ and $m$ and using Sierpinski's choice for $n_{m,q} = \lfloor \frac{24 m^6 q^2}{\epsilon} \rfloor + 2 \sim \frac{24 m^6 q^2}{\epsilon}$, we find that
\begin{equation*}
\sup_{p=0, \ldots q-1} \left\vert \frac{\sharp \{ \{q^n \nu \} \in [\frac{p}{q}, \frac{p+1}{q}) \mid n\leq N \}}{N} - \vert  I \vert \right\vert \leq \left( \frac{24}{\epsilon} \right)^{1/6} \frac{q^{1/3}}{N^{1/6}} + O\left(\frac{1}{N^{1/3}}\right) \ll_\epsilon q^{1/3} \frac{1}{N^{1/6}}
\end{equation*}
where the implied constant depends on $\epsilon$ but not on $q$.

Fix $I\subset [0,1)$, $\delta > 0$ and $k$ such that $\frac{2}{q^k} < \delta$. Choose $l,m$ such that $I \subset [\frac{l}{q^k}, \frac{m}{q^k})$ and $\vert I \vert < \frac{m-l}{q^k} + \frac{2}{q^k}$. Then
\begin{align*}
\frac{\sharp \{ \{q^n \nu\} \in I \mid n\leq N \}}{N}
& \leq \frac{\sharp \{ \{q^n \nu\} \in [\frac{l}{q^k}, \frac{m}{q^k}) \mid n\leq N \}}{N} \\
& \ll \frac{m-n}{q^k} + O\left((q^k)^{1/3} \frac{1}{N^{1/6}} \right)\\
&< \vert I \vert + \delta + O\left((q^k)^{1/3} \frac{1}{N^{1/6}} \right)\\
&= \vert I \vert + \delta + O_\delta \left( \frac{1}{N^{1/6}} \right).
\end{align*}
Since $\delta$ and $I$ were arbitrary, this shows that
\begin{equation*}
D_N ( \{q^n \nu \}) \ll \frac{1}{N^{1/6}}
\end{equation*}
for any $\nu \in E$ and any base $q$.

\subsection{Turing's Algorithm}\label{Turing_sec}

Since Turing's algorithm has been very well studied in \cite{becher_figueira_picchi2007:turing_unpublished}, we restrict ourselves to presenting their result in our terminology.
With respect to the quality of convergence to normality Becher, Figueira and Picchi note (Remark 23 in \cite{becher_figueira_picchi2007:turing_unpublished}) that for each initial segment of $\alpha$ of length $N = k 2^{2n+1}$ expressed to each base up to $e^L$ all words of length up to $L = \sqrt{\log N} / 4$ occur with the expected frequency plus or minus $e^{-L^2}$. Here, $k$ is a positive integer parameter, and $n$ is the step of the algorithm.

The discrepancy of $\{b^n \alpha\}$ for some base $b\geq 2$ can then be calculated as follows. Fix some arbitrary $\epsilon>0$ and an subinterval $I\subset [0,1)$. Let $n$ be large enough, such that $\frac{2}{b^L} < \epsilon$. Approximate $I$ by a $b^L$-adic interval $[\frac{c}{b^L}, \frac{d}{b^L})$ such that $[\frac{c-1}{b^L}, \frac{d+1}{b^L}) \supset I \supset [\frac{c}{b^L}, \frac{d}{b^L})$. Then
\begin{align*}
\frac{\sharp \{0 \leq m < N \mid \{b^m \alpha\} \in I \}}{N}
<& \frac{\sharp \{0 \leq m < N \mid \{b^m \alpha\} \in [\frac{c}{b^L}, \frac{d}{b^L}) \}}{N}\\
\leq &  \frac{d-c+2}{b^L} + O(e^{-L^2})) \\
\leq &  \vert I \vert + \epsilon + O\left(\frac{1}{N^{1/16}}\right).
\end{align*}

Since $I$ and $\epsilon$ were arbitrary this means that $\{b^n \alpha\}$ is uniformly distributed modulo one with discrepancy bounded by $O(\frac{1}{N^{1/16}})$.
\medskip

\subsubsection*{Acknowledgements} This research was supported by the Austrian Science Fund (FWF): I~1751-N26; W1230, Doctoral Program ``Discrete Mathematics''; and  SFB F 5510-N26. I would like to thank Manfred Madritsch and Robert Tichy for many discussions on the subject and for reading several versions of this manuscript.



\begin{bibdiv}
\begin{biblist}

\bib{alvarez_becher2015:levin}{article}{
	AUTHOR = {Alvarez, Nicol{\'a}s},	
	AUTHOR = {Becher, Ver{\'o}nica},
	TITLE = {M. Levin's construction of absolutely normal numbers with very low discrepancy},
	JOURNAL = {arXiv:1510.02004},
	URL = {http://arxiv.org/abs/1510.02004},
}

\bib{becher_bugeaud_slaman2013:simply_normal}{article}{
	AUTHOR = {Becher, Ver{\'o}nica},
	AUTHOR = {Bugeaud, Yann},
	AUTHOR = {Slaman, Theodore A.},
	TITLE  = {On Simply Normal Numbers to Different Bases},
	JOURNAL = {arXiv:1311.0332},
	YEAR = {2013},
	URL = {http://arxiv.org/abs/1311.0332},
}

\bib{becher_figueira:2002}{article}{
	AUTHOR = {Becher, Ver{\'o}nica},
    AUTHOR = {Figueira, Santiago},
    TITLE  = {An example of a computable absolutely normal number},
    JOURNAL = {Theoretical Computer Science},
    VOLUME = {270},
    YEAR = {2002},
    PAGES = {126--138},
}

\bib{becher_figueira_picchi2007:turing_unpublished}{article}{
	AUTHOR = {Becher, Ver{\'o}nica},
    AUTHOR = {Figueira, Santiago},
    AUTHOR = {Picchi, Rafael},
     TITLE = {Turing's unpublished algorithm for normal numbers},
   JOURNAL = {Theoret. Comput. Sci.},
  FJOURNAL = {Theoretical Computer Science},
    VOLUME = {377},
      YEAR = {2007},
    NUMBER = {1-3},
     PAGES = {126--138},
      ISSN = {0304-3975},
     CODEN = {TCSDI},
   MRCLASS = {03D80 (01A60 03-03 11K16 11Y16 68Q30)},
  MRNUMBER = {2323391 (2008j:03064)},
MRREVIEWER = {George Barmpalias},
       DOI = {10.1016/j.tcs.2007.02.022},
       URL = {http://dx.doi.org/10.1016/j.tcs.2007.02.022},
}

\bib{becher_heiber_slaman2013:polynomial_time_algorithm}{article}{
      author={Becher, Ver{\'o}nica},
      author={Heiber, Pablo~Ariel},
      author={Slaman, Theodore~A.},
       title={A polynomial-time algorithm for computing absolutely normal
  numbers},
        date={2013},
        ISSN={0890-5401},
     journal={Inform. and Comput.},
      volume={232},
       pages={1\ndash 9},
         url={http://dx.doi.org/10.1016/j.ic.2013.08.013},
      review={\MR{3132518}},
}

\bib{Becher_slaman2014:on_the_normality_of_numbers_to_different_bases}{article}{
    AUTHOR = {Becher, Ver{\'o}nica},
    AUTHOR = {Slaman, Theodore A.},
     TITLE = {On the normality of numbers to different bases},
   JOURNAL = {J. Lond. Math. Soc. (2)},
  FJOURNAL = {Journal of the London Mathematical Society. Second Series},
    VOLUME = {90},
      YEAR = {2014},
    NUMBER = {2},
     PAGES = {472--494},
      ISSN = {0024-6107},
   MRCLASS = {11K16 (03E15)},
  MRNUMBER = {3263961},
MRREVIEWER = {Christoph Baxa},
       DOI = {10.1112/jlms/jdu035},
       URL = {http://dx.doi.org/10.1112/jlms/jdu035},
}


\bib{borel1909}{article}{
year={1909},
issn={0009-725X},
journal={Rendiconti del Circolo Matematico di Palermo},
volume={27},
number={1},
doi={10.1007/BF03019651},
title={Les probabilit\'{e}s d\'{e}nombrables et leurs applications arithm\'{e}tiques},
url={http://dx.doi.org/10.1007/BF03019651},
publisher={Springer-Verlag},
author={Borel, \'{E}mile},
pages={247-271},
language={French}
}

\bib{bugeaud2012distribution}{book}{
  title={Distribution Modulo One and Diophantine Approximation},
  author={Bugeaud, Y.},
  isbn={9780521111690},
  lccn={2012013417},
  series={Cambridge Tracts in Mathematics},
  year={2012},
  publisher={Cambridge University Press},
}

\bib{drmota_tichy}{book}{
    AUTHOR = {Drmota, Michael},
    AUTHOR = {Tichy, Robert F.},
     TITLE = {Sequences, discrepancies and applications},
    SERIES = {Lecture Notes in Mathematics},
    VOLUME = {1651},
 PUBLISHER = {Springer-Verlag, Berlin},
      YEAR = {1997},
     PAGES = {xiv+503},
      ISBN = {3-540-62606-9},
   MRCLASS = {11Kxx (11K06 11K38)},
  MRNUMBER = {1470456 (98j:11057)},
MRREVIEWER = {Oto Strauch},
}


\bib{gaalgal1964:discrepancy}{article}{
    AUTHOR = {Gaal, S.},
    AUTHOR = {G{\'a}l, L.},
     TITLE = {The discrepancy of the sequence {$\{(2^{n}x)\}$}},
   JOURNAL = {Nederl. Akad. Wetensch. Proc. Ser. A 67 = Indag. Math.},
    VOLUME = {26},
      YEAR = {1964},
     PAGES = {129--143},
   MRCLASS = {40.10 (10.33)},
  MRNUMBER = {0163089 (29 \#392)},
MRREVIEWER = {E. J. Akutowicz},
}

\bib{kuipers_niederreiter}{book}{
    AUTHOR = {Kuipers, L.},
    AUTHOR = {Niederreiter, H.},
     TITLE = {Uniform distribution of sequences},
      NOTE = {Pure and Applied Mathematics},
 PUBLISHER = {Wiley-Interscience [John Wiley \& Sons], New
              York-London-Sydney},
      YEAR = {1974},
     PAGES = {xiv+390},
   MRCLASS = {10K05 (22D99)},
  MRNUMBER = {0419394 (54 \#7415)},
MRREVIEWER = {P. Gerl},
}

\bib{levin1979:absolutely_normal}{article}{
    AUTHOR = {Levin, M. B.},
     TITLE = {Absolutely normal numbers},
   JOURNAL = {Vestnik Moskov. Univ. Ser. I Mat. Mekh.},
  FJOURNAL = {Vestnik Moskovskogo Universiteta. Seriya I. Matematika,
              Mekhanika},
      YEAR = {1979},
    NUMBER = {1},
     PAGES = {31--37, 87},
      ISSN = {0201-7385},
   MRCLASS = {10K25},
  MRNUMBER = {525299 (80d:10076)},
MRREVIEWER = {J. Galambos},
}

\bib{levin1999:discrepancy}{article}{
    AUTHOR = {Levin, M. B.},
     TITLE = {On the discrepancy estimate of normal numbers},
   JOURNAL = {Acta Arith.},
  FJOURNAL = {Acta Arithmetica},
    VOLUME = {88},
      YEAR = {1999},
    NUMBER = {2},
     PAGES = {99--111},
      ISSN = {0065-1036},
     CODEN = {AARIA9},
   MRCLASS = {11K16 (11K38)},
  MRNUMBER = {1700240 (2000j:11115)},
MRREVIEWER = {Henri Faure},
}


\bib{madritsch_scheerer_tichy2015:absolutely_pisot_normal}{article}{
	AUTHOR = {Madritsch, Manfred G.},
	AUTHOR = {Scheerer, Adrian-Maria},
	AUTHOR = {Tichy, Robert F.},
	TITLE = {Absolutely Pisot Normal Numbers},
	JOURNAL = {Preprint},
}


\bib{schiffer1986:discrepancy_of_normal_numbers}{article}{
    AUTHOR = {Schiffer, Johann},
     TITLE = {Discrepancy of normal numbers},
   JOURNAL = {Acta Arith.},
  FJOURNAL = {Polska Akademia Nauk. Instytut Matematyczny. Acta Arithmetica},
    VOLUME = {47},
      YEAR = {1986},
    NUMBER = {2},
     PAGES = {175--186},
      ISSN = {0065-1036},
     CODEN = {AARIA9},
   MRCLASS = {11K16},
  MRNUMBER = {867496 (88d:11072)},
MRREVIEWER = {R. G. Stoneham},
}

\bib{schmidt1960:on_normal_numbers}{article}{
	AUTHOR = {Schmidt, Wolfgang M.},
     TITLE = {On normal numbers},
   JOURNAL = {Pacific J. Math.},
  FJOURNAL = {Pacific Journal of Mathematics},
    VOLUME = {10},
      YEAR = {1960},
     PAGES = {661--672},
      ISSN = {0030-8730},
   MRCLASS = {10.00},
  MRNUMBER = {0117212 (22 \#7994)},
MRREVIEWER = {F. Herzog},
}

\bib{schmidt1961:uber_die_normalitat}{article}{
    AUTHOR = {Schmidt, Wolfgang M.},
     TITLE = {\"{U}ber die {N}ormalit\"at von {Z}ahlen zu verschiedenen
              {B}asen},
   JOURNAL = {Acta Arith.},
  FJOURNAL = {Polska Akademia Nauk. Instytut Matematyczny. Acta Arithmetica},
    VOLUME = {7},
      YEAR = {1961/1962},
     PAGES = {299--309},
      ISSN = {0065-1036},
   MRCLASS = {10.33},
  MRNUMBER = {0140482 (25 \#3902)},
MRREVIEWER = {N. G. de Bruijn},
}

\bib{schmidt1972:irreg_of_distr_7}{article}{
    AUTHOR = {Schmidt, Wolfgang M.},
     TITLE = {Irregularities of distribution. {VII}},
   JOURNAL = {Acta Arith.},
  FJOURNAL = {Polska Akademia Nauk. Instytut Matematyczny. Acta Arithmetica},
    VOLUME = {21},
      YEAR = {1972},
     PAGES = {45--50},
      ISSN = {0065-1036},
   MRCLASS = {10K30 (10K25)},
  MRNUMBER = {0319933 (47 \#8474)},
MRREVIEWER = {I. Niven},
}

\bib{sierpinski1917:borel_elementaire}{article}{
	    AUTHOR = {Sierpinski, Waclaw},
     TITLE = {D\'emonstration \'el\'ementaire du th\'eor\`eme de M. Borel sur les nombres absolument normaux et d\'etermination effective d'un tel nombre},
   JOURNAL = {Bulletin de la Soci\'et\'e Math\'ematique de France},
    VOLUME = {45},
      YEAR = {1917},
     PAGES = {127--132},
}

\bib{turing1992:collected_works}{article}{
    AUTHOR = {Turing, Alan},
     TITLE = {A Note on Normal Numbers},
     BOOKTITLE = {Collected Works of A. M. Turing, Pure Mathematics, edited by J. L. Britton},
   PUBLISHER = {North Holland},
      YEAR = {1992},
     PAGES = {117--119},
}

\bib{wall1950:thesis}{article}{
    AUTHOR = {Wall, Donald D.},
     TITLE = {Normal Numbers},
      NOTE = {Thesis (Ph.D.)--University of California, Berkeley},
 PUBLISHER = {ProQuest LLC, Ann Arbor, MI},
      YEAR = {1950},
   MRCLASS = {Thesis},
  MRNUMBER = {2937990},
       URL =
              {http://gateway.proquest.com/openurl?url_ver=Z39.88-2004&rft_val_fmt=info:ofi/fmt:kev:mtx:dissertation&res_dat=xri:pqdiss&rft_dat=xri:pqdiss:0169487},
}

\end{biblist}
\end{bibdiv}

\end{document}